\documentclass[11pt, letterpaper]{article}
\usepackage{amsmath, amsthm, amssymb, amsfonts} 
\usepackage{mathrsfs} 
\usepackage{bm} 
\usepackage{graphicx} 
\usepackage{enumerate} 
\usepackage{geometry} 
\usepackage{setspace} 
\usepackage{lmodern} 
\usepackage{hyperref} 
\usepackage{color} 
\usepackage{xcolor} 
\usepackage{url} 
\usepackage{mathtools}
\usepackage{enumitem}
\usepackage{changepage}
\usepackage{microtype}
\usepackage{authblk}
\usepackage{amsfonts}
\usepackage{mathrsfs,amscd,amssymb,amsthm,amsmath,bm,graphicx,psfrag,subfigure,url,mathtools}
\usepackage{pict2e}
\usepackage{psfrag,amsmath}
\usepackage{tikz}
\usepackage{indentfirst}
\usepackage{hyperref}
\usepackage{bookmark}
\usepackage{enumerate}
\usepackage{latexsym,euscript,epic,eepic,color}
\usepackage{multirow}
\usepackage{multicol}
\usepackage{longtable}
\usepackage{adjustbox}

\usepackage{setspace}
\usepackage{epstopdf}
\allowdisplaybreaks
\usepackage{authblk}
\usepackage{pifont}

\theoremstyle{plain}
\newtheorem{theorem}{Theorem}[section]          
\newtheorem{lemma}{Lemma}[section]              
\newtheorem{claim}{Claim} 
\newtheorem{problem}{Problem}

\theoremstyle{definition}
\newtheorem{definition}{Definition}

\newcommand{\ar}{\operatorname{ar}}
\newcommand{\ex}{\operatorname{ex}}
\newcommand{\calR}{\mathcal{R}}
\newcommand{\calH}{\mathcal{H}}

\hypersetup{
    colorlinks=true,
    linkcolor=blue,
    citecolor=red,
    urlcolor=magenta,
}

\newcommand{\keywords}[1]{%
  \par\vspace{6pt}\noindent\textbf{Keywords: }#1\par
}

\newcommand{\MSC}[2][2020]{%
  \par\vspace{3pt}\noindent\textbf{MSC(#1): }#2\par
}
\title{Anti-Ramsey Number for Suspension of Edge-Critical Graphs}
\vspace{6mm}

\author{Shuchao Li } 

\author{Haojie Zheng\thanks{Corresponding author. \\
\hspace*{2em}E-mail address: lscmath@ccnu.edu.cn (S. Li), zhj9536@126.com (H. Zheng)}}

\affil{School of Mathematics and Statistics, and Hubei Key Lab--Math. Sci.,\linebreak Central China Normal University, Wuhan 430079, China}

\date{\today}

\allowdisplaybreaks
\begin{document}
\baselineskip=0.23in

\maketitle

\begin{abstract}
An edge-colored graph is called a rainbow graph if all its edges have distinct colors. The \textit{anti-Ramsey number}, denoted by $\ar(n,F),$ for a fixed graph $F$ and a positive integer $n$, is the maximum number of colors used in an edge-coloring of the complete graph $K_n$ that contains no rainbow copy of $F$. Meanwhile, the \textit{Tur\'an number}, denoted by $\ex(n,F),$ for graph $F$ and $n$, is the maximum number of edges in an $n$-vertex graph that does not contain $F$ as a subgraph. For a vertex $v$ and a multiset $\mathcal{H}$ of graphs, the \textit{suspension} $\mathcal{H} + v$ of $\mathcal{H}$ is the graph obtained by connecting the vertex $v$ to all vertices of $H$ for each $H \in \mathcal{H}$. Let integers $k\ge 1$ and $r\ge 2$ be fixed, and suppose that $\mathcal{H}_{k+1}=\{H_1, H_2, \ldots, H_{k+1}\}+v$ satisfying $H_1, H_2, \ldots, H_{k+1}$ are pairwise vertex-disjoint edge-critical graphs, and $\chi(H_i)=r$ for $i=1,2,\ldots, k+1$. 
In this paper, we determine 
$
\ar(n,\mathcal{H}_{k+1}) 
$
for $k\ge 1$, $r\ge 2$ and sufficiently large $n$. This result unifies and generalizes a result of Liu et al. (arXiv:2411.08475) concerning the friendship graph, and a result of Lu et al. (arXiv:2507.13165) on the intersecting cliques.
\end{abstract}
\keywords{Anti-Ramsey number; Edge-critical graph; Suspension; Tur\'an number}
\MSC{05C15; 05C35}

\section{Introduction}

All graphs considered in this paper are finite and simple. For a graph $G$, let $V(G)$, $E(G)$, $e(G)$, $\delta(G)$, $\Delta(G)$, $\chi(G)$, and $\nu(G)$ denote its vertex set, edge set, number of edges, minimum degree, maximum degree, chromatic number, and matching number, respectively. 
For $X\subseteq V(G)$ and $x\in V(G)$, write $N_X(x)=N_G(x)\cap X$ and $d_X(x)=|N_X(x)|$.
For a positive integer $k$, let $[k]=\{1,\ldots,k\}$. The symbol $\sim$ stands for adjacency between the two vertices in question.

For a vertex $v$ and a multiset $\mathcal{H}$ of graphs, the \textit{suspension} $\mathcal{H} + v$ of $\mathcal{H}$ is the graph obtained by connecting the vertex $v$ to all vertices of $H$ for each $H \in \mathcal{H}$, in which $v$ is called the \textit{center} and each subgraph $H+v$ is called 
a \emph{branch}. A graph is said to be properly colored if each vertex is colored so that adjacent vertices have distinct colors. If $H$ can be properly colored by $k$ colors, then we say $H$ is $k$-\textit{colorable}. The \textit{chromatic number} $\chi(H)$ is $k$ if $H$ is $k$-colorable and not $(k-1)$-colorable. We say that $e \in E(H)$ is a \textit{color}-\textit{critical edge} of $H$ if $\chi(H - e) < \chi(H)$. A graph is \textit{edge-critical} if it contains an edge whose deletion reduces its chromatic number. Clearly, every odd cycle and every non-trivial complete graph is edge-critical. In the sequel, a color-critical edge is simply called a critical edge.

Let integers $k\ge 1$ and $r\ge 2$ be fixed, and suppose that $\{H_1, H_2, \ldots, H_{k+1}\}$ is a collection of $k+1$ pairwise vertex-disjoint edge-critical graphs each with chromatic number $r$. Here, $H_1, H_2, \ldots, H_{k+1}$ are not required to be isomorphic.
Let $\mathcal{H}_{k+1}=\{H_1,\ldots,H_{k+1}\}+v$ be the suspension of $\{H_1, H_2, \ldots, H_{k+1}\}$. Then, for every $j\in[k+1]$, define the $k$-branch sub-suspension $\mathcal{H}_k^j =\{H_i:i\in[k+1]\setminus\{j\}\}+v.$

The \textit{Tur\'an number}, denoted by $\ex(n,F),$ for a fixed graph $F$ and positive integer $n$, is the maximum number of edges in an $n$-vertex graph that does not contain $F$ as a subgraph and the Tur\'an graph serves a pivotal role. 
Given integers $n,r$ with $n\geqslant r\geqslant2$, the Turán graph $T_r(n)$ is an $n$-vertex complete $r$-partite graph whose parts differ in size by at most one (each part has size $\lceil n/r\rceil$ or $\lfloor n/r\rfloor$). We write $t_r(n)$ for the edge number of $T_r(n)$. As stated by the classic Tur\'an’s theorem \cite{c1t}:
$
\operatorname{ex}(n, K_{r+1}) = t_r(n) = \left(1 - \frac{1}{r} + o(1)\right)\binom{n}{2},
$
and $T_r(n)$ is the sole extremal graph for this Turán-type extremal problem.

Determining $\operatorname{ex}(n,F)$ is one of the most important problems in extremal graph
theory. Although the Tur\'an numbers of non-bipartite graphs are asymptotically determined
by Erd\H{o}s-Stone-Simonovits theorem~\cite{6,7}, it is still a challenge to determine the exact values of Tur\'an
number for many non-bipartite graphs. The Tur\'an numbers are known exactly for only a few specific graphs (e.g. see \cite{CLL,8,10,13,Simonovits,19,20,21,22}). Among all the existing results, the Tur\'an number of the graph consisting of some specific graphs that intersect in exactly one common vertex is widely studied (e.g. see \cite{CGPW,EFGG,HLZ,11,14,18}).

An edge-colored graph is called a \textit{rainbow graph} if all its edges have distinct colors. The \textit{anti-Ramsey number}, denoted by $\ar(n,F),$ for a fixed graph $F$ and a positive integer $n$, is the maximum number of colors used in an edge-coloring of the complete graph $K_n$ that contains no rainbow copy of $F$. The anti-Ramsey theory was initiated by Erd\H{o}s, Simonovits and S\'os \cite{ESS}, who revealed a close connection among anti-Ramsey problems, Tur\'an-type extremal problems, and the chromatic properties of the forbidden graph. Since then, a great many results have been established for a broad range of graphs embedded within complete graphs, with matchings receiving particularly thorough attention (see \cite{c3,c6,c7,c10,c12,Schiermeyer}).
In recent years, subsequent work extended the family of host graphs far beyond complete graphs: alongside complete graphs, the anti-Ramsey numbers of matchings have also been extensively explored for planar graphs \cite{c2,c13,c14,c22,c31,c32}, bipartite graphs \cite{c23,c26,c27}, and hypergraphs \cite{c9,c18,c29,c35}. For further interesting topics concerning anti-Ramsey theory, readers may refer to \cite{FMO,JP}.


The present work concerns the anti-Ramsey number for the suspension of edge-critical graphs with the same chromatic number. This provides a unified framework for several graph families formed by subgraphs sharing a common vertex. Two typical examples are the friendship graph and the intersecting cliques.  Liu, Lu and Luo \cite{LiuLuLuo} determined the anti-Ramsey number of friendship graph and Lu, Luo and Ma \cite{LuLuoMa} determined the anti-Ramsey number of intersecting cliques.

Motivated by \cite{LiuLuLuo,LuLuoMa}, we determine the anti-Ramsey number for a suspension of $k+1$ edge-critical graphs.
More importantly, our approach reveals why the edge-critical condition arises naturally in this problem. The essential property is not the particular structure of these edge-critical graphs, but the existence, in each such graph, of a critical edge whose deletion reduces its
chromatic number. Such a critical edge provides the local structure needed to embed the suspension of the corresponding edge-critical graph into the host graph. From this perspective, friendship graphs and intersecting cliques are two special cases of critical-edge structure. This observation allows us to treat suspensions formed by arbitrary edge-critical graphs with the same chromatic number in a unified framework. 
Our main result can be described as follows.
\begin{theorem}\label{thm:main}
Let $k\geq 1$ and $r\geq 2$ be fixed integers, and let
$H_1,\ldots,H_{k+1}$ be edge-critical graphs with
$\chi(H_i)=r$ for every $i\in[k+1]$. There is a function $\phi\bigl(r, |V(H_1)|,\ldots,|V(H_{k+1})|\bigr)$
such that if $n\ge\phi\bigl(r, |V(H_1)|,\ldots,|V(H_{k+1})|\bigr)$,
then for every $j\in[k+1]$,
\begin{equation*}\label{e1}
\operatorname{ar}(n,\mathcal{H}_{k+1})=\ex(n,\calH_{k}^{j})+1=t_r(n) +1+
\begin{cases}
k^2 - k & \text{if } k \text{ is odd},\\[4pt]
k^2 - \dfrac{3}{2}k & \text{if } k \text{ is even}.
\end{cases}
\end{equation*}
\end{theorem}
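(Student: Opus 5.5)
The plan is to prove the two equalities separately. The Turán-number part, $\ex(n,\calH_k^j)=t_r(n)+k^2-k$ or $t_r(n)+k^2-\tfrac32 k$, should follow the Chen--Gould--Pfender--Wei / Erd\H{o}s--Füredi--Gould--Gunderson template for suspensions of edge-critical graphs. The anti-Ramsey part then follows by the usual sandwich $\ex(n,\calH_k^j)+1\le \ar(n,\calH_{k+1})\le \ex(n,\calH_k^j)+1$.

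\emph{Turán number of $\calH_k^j$.}

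For the lower bound I take $T_r(n)$ and embed into one part $V_1$ a graph $Q$ with $\Delta(Q)\le k-1$ and $\nu(Q)\le k-1$ of maximum size. By the Chvátal--Hanson / Abbott--Hanson--Sauer bound this is $k^2-k$ edges when $k$ is odd (two disjoint copies of $K_k$) and $k^2-\tfrac32k$ edges when $k$ is even (a near-regular construction). The degree and matching restrictions prevent $k$ branches at a common center: each branch needs a critical edge lying inside a part, either at the center or disjoint from it.

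For the upper bound I use the standard stability argument. Take $G$ with $e(G)\ge t_r(n)+k^2-k+1$ (respectively $+k^2-\tfrac32k+1$). Since each $H_i$ is edge-critical, Simonovits' theorem gives that after deleting few low-degree vertices, $G$ is close to $T_r(n)$ with a partition $V_1,\dots,V_r$ of max-cut type. Vertices of high degree into their own part are handled by showing that they, together with many common neighbours in every other part, let us embed $k$ branches greedily. The key embedding lemma: if $x$ lies in a suitable $V_i$ and $x$ is incident to $k$ "bad" edges inside parts, or there are $k$ disjoint bad edges around, then we can embed each $H_i$ minus its critical edge $r-1$-partitely using the huge common neighbourhoods, with the critical edge placed on a bad edge. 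This yields $\Delta\le k-1$ and $\nu\le k-1$ for the graph of inside-part edges (with missing cross edges counted against them), hence the Chvátal--Hanson bound gives the contradiction.

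\emph{Anti-Ramsey lower bound.}

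Color a copy of the extremal graph for $\calH_k^j$ rainbow, and give all remaining edges one extra color. A rainbow $\calH_{k+1}$ uses at most one edge of the extra color, and deleting that edge kills at most one branch. Since the $H_i$ are vertex-disjoint, the center plus the remaining branches contain some $\calH_k^{j'}$. Here I need $\ex(n,\calH_k^{j'})$ independent of $j'$, and the extremal graph must avoid every $\calH_k^{j'}$. The constructions above do this, since the argument used only critical edges and $\chi=r$, not the identity of the $H_i$.

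\emph{Anti-Ramsey upper bound.}

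Given a coloring with $\ex(n,\calH_k^j)+2$ colors, take a representing rainbow subgraph $G$ with one edge per color. Then $e(G)>\ex$, so $G$ contains many copies of $\calH_k^{j}$. More usefully, by the stability structure $G$ is near-Turán with at least $k^2-k+2$ edges beyond the structure. I would argue that one can find a rainbow $\calH_k$ with a spare critical configuration and then add the $(k+1)$-th branch using the huge number of colors among cross edges. Only boundedly many colors are used by the first $k$ branches, and the complete $r$-partite cross structure among common neighbourhoods contains a copy of $H_{j}$ avoiding those colors plus one extra color needed for the critical edge. The critical edge is taken inside a part via an inner edge of a new color, which exists since the inner-edge count exceeds the Chvátal--Hanson bound by one.

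This last step is the main obstacle: producing $k+1$ branches with all critical edges rainbow-distinct from only $\ex+2$ colors. I would first try exploiting that the extra color edges can be chosen inside parts, and that any two colour classes can be replaced to keep $\Delta$ or $\nu$ at least $k$ after one more edge.
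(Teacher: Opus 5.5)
\textbf{Verdict: genuine gap.} Your proposal does not prove the anti-Ramsey upper bound, which is the actual content of the theorem.

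\textbf{What is correct.} Your lower bound matches the paper's: colour a graph of $\mathcal G_{n,k,r}$ rainbow and all other edges with one extra colour, using that this extremal family does not depend on which $H_i$ are present. For the Tur\'an number the paper simply cites the known result $\ex(n,\calH_k)=t_r(n)+f(k-1,k-1)$ with extremal family $\mathcal G_{n,k,r}$. Citing it is legitimate. Your sketch of its upper bound is oversimplified, though. When inside edges occur in several parts, their union need not have matching number at most $k-1$, which is why the $k$-good partition and the Chen--Gould--Pfender--Wei counting lemma are needed.

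\textbf{Why your suggested route fails.}
\begin{itemize}
\item A representing graph $G$ already contains an edge of every colour, so there is no ``inner edge of a new colour'' to add.
\item $G$ is $\calH_{k+1}$-free, yet with $\ex(n,\calH_k^j)+2$ colours we have $e(G)=t_r(n)+f(k-1,k-1)+2<t_r(n)+f(k,k)=\ex(n,\calH_{k+1})$ for $k\ge2$. So no counting inside a single representing graph gives a contradiction; an excess over $f(k-1,k-1)$ only produces $k$ branches.
\item Any new critical edge must be swapped in for the edge of $G$ with the same colour, and you have no control over which edge is lost.
\item Invoking an $O(1)$-precise stability structure for $G$ requires a minimum-degree hypothesis that you never secure.
\end{itemize}

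\textbf{What is missing.} The paper's argument has four steps.

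(1) Reduce to the case where every representing graph has $\delta\ge\frac{r-1}{r}n-(k+1)$. Otherwise, delete from $K_n$ a vertex $u$ of small degree in some representing graph. At most $d(u)$ colours disappear, and since $t_r(n)-t_r(n-1)=\lfloor\frac{r-1}{r}n\rfloor$, the surplus of colours over $\ex(\cdot,\calH_k^j)$ grows by $k+1$ per deletion, which cannot continue indefinitely. Under the degree condition, $G$ has a $(k+1)$-good partition with $O(1)$ missing cross-edges. The sets $T_i\subseteq V_i$ of vertices completely joined to the other parts then have size $\frac nr-O(1)$.

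(2) $K_n[T_i]$ has no rainbow matching of size $k+1$. Swapping such a matching into $G$ yields a representing graph in which its edges are $k+1$ type-I seeds at a common vertex of another part, giving a rainbow $\calH_{k+1}$. Hence, after removing a maximal rainbow matching, at most $k$ colours remain on the rest of $T_i$. This yields a linear-size monochromatic matching $M_i$ of some colour $\alpha_i$.

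(3) Delete from $G$ its edges of colours $\alpha_1,\alpha_2$ to get $G_0$ with $e(G_0)\ge\ex(n,\calH_k^j)$. $G_0$ is $\calH_k^j$-free, because an $M_1$-edge avoiding a copy would supply the missing branch. Hence $G_0\in\mathcal G_{n,k,r}$ and $\alpha_1\ne\alpha_2$.

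(4) The now-known structure of $G_0$ provides a matching $M$ of size $k-1$ inside the embedded $J$ or $2K_k$. Together with some $e_2\in M_2$, these edges are type-I seeds, and $e_1=xy\in M_1$ is a type-II seed at $x$. So the rainbow graph $G_0+e_1+e_2$ contains $\calH_{k+1}$ centred at $x$, a contradiction.

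Two ideas replace your unresolved last step: the monochromatic matchings, which let you re-insert the two deleted colours wherever you like, and the uniqueness of the Tur\'an extremal structure.
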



The remainder of the paper is organized as follows. Section~\ref{s2} collects the extremal and structural results needed in the proof. In Section~\ref{s3}, we establish an embedding lemma and use it to handle the case in which every representing graph has large minimum degree. Section~\ref{s4} completes the proof of Theorem~\ref{thm:main}. We conclude in Section~\ref{s5} with a related problem concerning anti-Ramsey numbers of edge-critical graphs.

\section{Preliminaries}\label{s2}

In this section, we collect the extremal and structural results needed in the proof of Theorem~\ref{thm:main}. 
Recall that Tur\'an graph $T_r(n)$ is a balanced complete $r$-partite graph on $n$ vertices.
We shall repeatedly use the identity
\[
t_r(n)-t_r(n-1)
=
\left\lfloor \frac{(r-1)n}{r}\right\rfloor.
\]
Indeed, $T_r(n)$ can be obtained from $T_r(n-1)$ by adding one vertex to a smallest part. The degree of the new vertex is
$
n-\left\lceil\frac{n}{r}\right\rceil
=\lfloor\frac{(r-1)n}{r}\rfloor .
$

We next recall a function concerning a graph $G$ with bounded matching number $\nu(G)$ and maximum degree $\Delta(G)$. For positive integers $\nu$ and $\Delta$, define
\[
f(\nu,\Delta)
=
\max\bigl\{
e(G):\nu(G)\leq \nu,\ \Delta(G)\leq \Delta
\bigr\}.
\]

\begin{definition}[{$k$-good partition}]
\label{def:good-partition}
Let $k,r\geq2$ be integers. A partition $V(G)=V_1\cup V_2\cup\cdots\cup V_r$ is called \emph{$k$-good} if, for every $i\in[r]$, the following conditions
hold:
\begin{enumerate}[label=\textup{(\roman*)}]
\item
$\Delta(G[V_i])\leq k-1$;

\item
$\sum\limits_{j\in[r]\setminus\{i\}}\nu(G[V_j])\leq k-1$;

\item For every $u\in V_i$,
$d_{V_i}(u)+
\sum\limits_{j\in[r]\setminus\{i\}}
\nu\bigl(G[N_G(u)\cap V_j]\bigr)
\leq k-1.
$
\end{enumerate}
\end{definition}

Let $G$ be a graph with a partition of the vertices into $r$ non-empty parts $V(G)=V_1\cup V_2\cup\cdots\cup V_r$.
Define $G_{\rm cr}=G-(\bigcup_{i=1}^{r}E(G[V_i]))$. For distinct $i,j\in[r]$, let $G[V_i,V_j]$ denote the bipartite
subgraph of $G$ consisting of all edges between $V_i$ and $V_j$.
For $x\in V_i$ and $y\in V_j$ with $i\neq j$, we call $xy$ a \emph{missing cross-edge} if $xy\notin E(G)$.

We now state the preliminary results that will be used throughout the
paper.

\begin{lemma}[\cite{AbbottHansonSauer}]
\label{lem:AHS}
For every integer $k\geq2$,
\[
f(k-1,k-1)
=
\begin{cases}
k^2-k, & \text{if $k$ is odd},\\[1mm]
k^2-\dfrac{3k}{2}, & \text{if $k$ is even}.
\end{cases}
\]
Moreover, when $k$ is odd, an extremal graph is the disjoint union of two copies of $K_k$. When $k$ is even, an extremal graph has $2k-1$ vertices,
$k^2-\frac{3k}{2}$ edges, and maximum degree $k-1$.
\end{lemma}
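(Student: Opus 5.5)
The plan is to prove a matching lower bound by explicit constructions and an upper bound via the Tutte--Berge formula. Set $\nu=\Delta=k-1$.

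\emph{Lower bound.} For $k$ odd, take $2K_k$. Each clique has maximum degree $k-1$. Each clique has matching number $(k-1)/2$, so the union has matching number $k-1$. The number of edges is $2\binom{k}{2}=k^2-k$.

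For $k$ even, take any graph on $2k-1$ vertices in which all degrees equal $k-1$ except one vertex of degree $k-2$. Such a graph exists because the degree sum $(2k-2)(k-1)+(k-2)$ is even, and the sequence is graphic by Erd\H{o}s--Gallai. Alternatively, delete a near-perfect matching from the complement of a suitable circulant graph. Any graph on $2k-1$ vertices has matching number at most $k-1$. Its edge count is $\bigl((2k-1)(k-1)-1\bigr)/2=k^2-\tfrac{3k}{2}$.

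\emph{Upper bound.} Let $G$ satisfy $\nu(G)\le k-1$ and $\Delta(G)\le k-1$, and delete isolated vertices.
\begin{enumerate}
\item Take a Gallai--Edmonds set $S$. Every component of $G-S$ is then odd and factor-critical, say $C_1,\dots,C_t$ with $|C_i|=2m_i+1$. Moreover $\nu(G)=|S|+\sum_i m_i$.
\item Edges incident to $S$ number at most $\Delta|S|$.
\item Edges inside $C_i$ number at most $\min\{m_i(2m_i+1),\ \lfloor(2m_i+1)\Delta/2\rfloor\}=\min\{m_i(2m_i+1),\ m_i\Delta+\lfloor\Delta/2\rfloor\}$.
\end{enumerate}
Hence
\[
e(G)\le \Delta|S|+\sum_i \min\bigl\{2m_i^2+m_i,\ m_i\Delta+\lfloor\Delta/2\rfloor\bigr\},
\]
subject to $|S|+\sum_i m_i\le\nu$.

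Compare each term with $m_i\Delta$, which is the contribution per unit of matching number that $S$ would give. The excess of component $C_i$ over $m_i\Delta$ is $\min\{m_i(2m_i+1-\Delta),\ \lfloor\Delta/2\rfloor\}$. This excess is positive only when $2m_i+1>\Delta$, that is $m_i\ge\lceil\Delta/2\rceil$. In that case the excess is at most $\lfloor\Delta/2\rfloor$. The number of components with positive excess is therefore at most $\lfloor\nu/\lceil\Delta/2\rceil\rfloor$. This gives the Chv\'atal--Hanson type bound
\[
e(G)\le \nu\Delta+\lfloor\Delta/2\rfloor\,\lfloor\nu/\lceil\Delta/2\rceil\rfloor .
\]

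Now substitute $\nu=\Delta=k-1$. For $k$ odd the bound is $(k-1)^2+\tfrac{k-1}{2}\cdot 2=k^2-k$. For $k$ even it is $(k-1)^2+\tfrac{k-2}{2}\cdot 1=k^2-\tfrac{3k}{2}$. These agree with the constructions.

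\emph{Main obstacle and extremal structure.} The delicate step is the per-component bound in the middle range, where $m_i\ge\lceil\Delta/2\rceil$ but $2m_i^2+m_i$ and $m_i\Delta+\lfloor\Delta/2\rfloor$ are close. In that range one must check that the integer parts of $(2m_i+1)\Delta/2$ combine correctly. For $k$ even this parity loss is exactly what produces the $-\tfrac{3k}{2}$ instead of $-k$.

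The "moreover" statements follow by tracking equality in this argument. For $k$ odd, equality forces $S=\emptyset$ and two components with $m_i=(k-1)/2$ that are complete, giving $2K_k$. For $k$ even, equality forces a single component on $2k-1$ vertices with maximum degree $k-1$. The existence of such a graph was already checked in the construction.
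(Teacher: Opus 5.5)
\paragraph{Comparison with the paper.} The paper gives no proof of this lemma. It quotes the result from Abbott--Hanson--Sauer, and the statement is simply the case $\nu=\Delta=k-1$ of the Chv\'atal--Hanson formula in Lemma~\ref{CH}, which the paper also only cites. Your argument is a self-contained replacement. It is essentially the Gallai--Edmonds proof of the Chv\'atal--Hanson bound due to Balachandran and Khare. This makes the lemma independent of the citation, and it actually proves the general bound $\nu\Delta+\lfloor\Delta/2\rfloor\lfloor\nu/\lceil\Delta/2\rceil\rfloor$. After that, your substitutions and constructions are correct.

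The ``main obstacle'' you flag does not exist. The minimum in step~3 already caps each component's excess at $\lfloor\Delta/2\rfloor$, so nothing remains to check.

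There is one imprecision in step~1. For the canonical Gallai--Edmonds set $A(G)$, the graph $G-A(G)$ also contains the even components of $G[C(G)]$, so step~1 is false as worded. There are two easy fixes:
\begin{itemize}
\item Take $S$ to be an inclusion-maximal Tutte--Berge barrier. Then every component of $G-S$ is odd and factor-critical, and $\nu(G)=|S|+\sum_i m_i$.
\item Or simply allow even components of order $2p$. Each adds $p$ to $\nu(G)$ and spans at most $p\Delta$ edges, so its excess is nonpositive.
\end{itemize}

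\paragraph{The extremal-structure claim.} Your last paragraph overreaches.

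For odd $k$ your equality analysis is right. It even shows that $2K_k$ is the unique extremal graph up to isolated vertices.

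For even $k$ the analysis is wrong. Equality forces only one component with $m_1\ge k/2$ and excess $\frac{k-2}{2}$. The remaining budget $\nu-m_1$ can be spent on $S$, or on components with $2m_i+1=\Delta$; both have zero excess. Concretely, for even $k\ge4$, take the disjoint union of $K_{k-1}$ with the complement, in $K_{k+1}$, of $P_3\cup\frac{k-2}{2}K_2$. This graph has:
\begin{itemize}
\item maximum degree $k-1$;
\item matching number at most $\frac{k-2}{2}+\frac{k}{2}=k-1$;
\item exactly $\binom{k-1}{2}+\binom{k+1}{2}-\frac{k+2}{2}=k^2-\frac{3k}{2}$ edges.
\end{itemize}
Yet it has $2k$ non-isolated vertices in two components. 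Replacing $K_{k-1}$ by $\frac{k-2}{2}$ disjoint copies of $K_{1,k-1}$ also works.

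The lemma only asserts that \emph{some} extremal graph has the stated form. Your explicit construction already proves the ``moreover'' part, so just drop the uniqueness claim for even $k$.

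The circulant aside is also off as worded. Deleting a near-perfect matching from a $k$-regular graph on $2k-1$ vertices leaves a vertex of degree $k$. What you want is to add a near-perfect matching of non-edges to a $(k-2)$-regular circulant. The Erd\H{o}s--Gallai argument suffices anyway.
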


For the degenerate case $k=1$, we set $f(0,0)=0$.

\begin{lemma}[\cite{ChvatalHanson}]
\label{CH}
For every $\nu\geq1$ and $\Delta\geq1$,
\[
f(\nu,\Delta)
=
\nu\Delta
+
\left\lfloor\frac{\Delta}{2}\right\rfloor
\left\lfloor
\frac{\nu}{\lceil\Delta/2\rceil}
\right\rfloor
\leq \nu\Delta+\nu.
\]
\end{lemma}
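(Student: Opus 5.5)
The equality part of the statement is the theorem of Chv\'atal and Hanson \cite{ChvatalHanson}, and I would simply cite it. The only new content is the final inequality $f(\nu,\Delta)\le \nu\Delta+\nu$, and I plan to derive it directly from the closed formula. This bound is what gets used later to control the number of edges inside parts with bounded matching number and bounded degree.

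\textbf{Step 1: bound the correction term.} For $\Delta\ge1$ we have $\lceil\Delta/2\rceil\ge1$. Dropping the floor can only increase the value, so
\[
\left\lfloor\frac{\Delta}{2}\right\rfloor\left\lfloor\frac{\nu}{\lceil\Delta/2\rceil}\right\rfloor
\le \left\lfloor\frac{\Delta}{2}\right\rfloor\cdot\frac{\nu}{\lceil\Delta/2\rceil}.
\]

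\textbf{Step 2: compare floor and ceiling.} Since $\lfloor\Delta/2\rfloor\le\lceil\Delta/2\rceil$, the ratio $\lfloor\Delta/2\rfloor/\lceil\Delta/2\rceil$ is at most $1$. Hence the correction term is at most $\nu$, and adding $\nu\Delta$ gives $f(\nu,\Delta)\le\nu\Delta+\nu$.

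\textbf{Independent check of the weaker bound.} If one prefers not to rely on the exact formula, the bound can also be checked via the Gallai--Edmonds (or Tutte--Berge) decomposition. Take $G$ with $\nu(G)\le\nu$ and $\Delta(G)\le\Delta$, and let $A$ be the Tutte--Berge barrier. Then $\nu(G)=\tfrac12\bigl(|V|-\mathrm{odd}(G-A)+|A|\bigr)$, and we split the edges into two groups.
\begin{itemize}
\item \emph{Edges incident to $A$:} there are at most $|A|\Delta$ of them.
\item \emph{Edges inside components of $G-A$:} a factor-critical component $C$ with $|C|=2m+1$ has at most $\min\{\binom{2m+1}{2},(2m+1)\Delta/2\}$ edges. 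This is at most $m\Delta+m$, using $\binom{2m+1}{2}=m(2m+1)$ when $2m+1\le\Delta+1$, and $(2m+1)\Delta/2\le m\Delta+\Delta/2\le m\Delta+m$ when $2m\ge\Delta$.
\item \emph{Even components:} these have perfect matchings and are handled the same way.
\end{itemize}
Summing, with $|A|+\sum m_C\le\nu$, gives at most $\nu\Delta+\nu$.

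The only mildly delicate point is the per-component inequality in the last argument. For the route via the cited formula there is no real obstacle, since the argument is a two-line estimate.
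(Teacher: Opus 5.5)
Your proposal is correct and matches the paper. The paper simply cites the Chv\'atal--Hanson formula and treats $f(\nu,\Delta)\le\nu\Delta+\nu$ as the immediate consequence you derive from $\lfloor\nu/\lceil\Delta/2\rceil\rfloor\le\nu/\lceil\Delta/2\rceil$ and $\lfloor\Delta/2\rfloor\le\lceil\Delta/2\rceil$.

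Your Tutte--Berge check of the weaker bound is also sound. It works for any Tutte--Berge barrier $A$, not only the Gallai--Edmonds one, because your per-component estimates ($m\Delta+m$ for an odd component of order $2m+1$, and $p\Delta$ for an even one of order $2p$) depend only on the order of the component and on $\Delta$. Factor-criticality and perfect matchings are never actually used.
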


\begin{lemma}[\cite{CGPW}]
\label{maximal}
Suppose that $G$ has a $k$-good partition $V(G)=V_1\cup V_2\cup\cdots\cup V_r$.
Let $G'$ be a minimal induced subgraph of $G$ for which $e(G')-\sum_{1\leq i<j\leq r}|V_i'||V_j'|$ is maximum, 
where $V_i'=V(G')\cap V_i$ for every $i\in[r]$.
Then the following properties hold:
\begin{enumerate}[label=\textup{(\roman*)}]
\item
$e(G')-\sum\limits_{1\leq i<j\leq r}|V_i'||V_j'|\leq f(k-1,k-1);$

\item For every $i\in[r]$ and every $x\in V_i'$, we have
$0<d_{G'}(x)-|V(G')\setminus V_i'|
\leq k-1-
\sum\limits_{j\in[r]\setminus\{i\}}
\nu\bigl(G'[V_j']\bigr);$

\item 
If $\nu\bigl(G'[V_i']\bigr)\geq2$
for every $i\in[r]$,
then
$e(G')-\sum\limits_{1\leq i<j\leq r}|V_i'||V_j'|
<
f(k-1,k-1).
$
\end{enumerate}
\end{lemma}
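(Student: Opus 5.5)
The plan is to work with the excess functional
\[
g(S)=e(G[S])-\sum_{1\le i<j\le r}|S\cap V_i||S\cap V_j| ,
\]
defined for $S\subseteq V(G)$. Write $e_{\rm in}(S)$ for the number of edges of $G[S]$ inside the parts and $m(S)$ for the number of missing cross-edges in $G[S]$. Then $g(S)=e_{\rm in}(S)-m(S)$. Since $g(\emptyset)=0$, the maximum of $g$ is nonnegative. Let $G'=G[S]$ be a minimal maximizer, and put $\nu_i=\nu(G'[V_i'])$ and $N=\sum_i\nu_i$.

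\emph{Property (ii).} For $x\in V_i'$, deleting $x$ changes $g$ by
\[
-\bigl(d_{G'}(x)-|V(G')\setminus V_i'|\bigr)=-\bigl(d_{V_i'}(x)-m(x)\bigr),
\]
where $m(x)$ is the number of missing cross-edges at $x$ inside $G'$.
\begin{itemize}
\item Lower bound. By maximality the deletion cannot increase $g$, and by minimality it cannot leave $g$ unchanged. Hence $d_{V_i'}(x)-m(x)>0$.
\item Upper bound. For $j\ne i$, fix a maximum matching $M_j$ of $G'[V_j']$. Each edge of $M_j$ either has both endpoints in $N_G(x)$, or has an endpoint non-adjacent to $x$, which gives a distinct missing cross-edge at $x$. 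Therefore $\nu_j\le \nu(G[N_G(x)\cap V_j])+m_j(x)$, where $m_j(x)$ counts the missing cross-edges from $x$ into $V_j'$.
\item Summing over $j\ne i$ and using condition (iii) of the $k$-good partition gives
\[
d_{V_i'}(x)-m(x)\le k-1-\sum_{j\ne i}\nu_j .
\]
\end{itemize}

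\emph{Property (i).} We have $g(G')\le e_{\rm in}(V(G'))=\sum_i e(G'[V_i'])$.
\begin{itemize}
\item By (ii), $G'[V_i']$ has maximum degree at most $\Delta_i:=k-1-s_i$, where $s_i=\sum_{j\ne i}\nu_j$.
\item By the lower bound in (ii), every vertex of $G'$ has positive inside degree. So any nonempty $V_i'$ satisfies $\Delta_i\ge1$, that is, $s_i\le k-2$.
\item Condition (ii) of the good partition, applied to an index $j\ne i$ (possible since $r\ge2$), gives $\nu_i\le k-1$. More generally, $N-\nu_j\le k-1$ for every $j$.
\item By Lemma~\ref{CH}, $e(G'[V_i'])\le f(\nu_i,\Delta_i)$, so it suffices to prove
\[
\sum_i f(\nu_i,\,k-1-N+\nu_i)\le f(k-1,k-1).
\]
\end{itemize}
If only one part contains edges, this is immediate from $\nu_i\le k-1$, $\Delta_i\le k-1$ and monotonicity of $f$. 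Otherwise, $f$ is superadditive in $\nu$ (take a disjoint union) and monotone in $\Delta$. The loss in each degree cap, $N-\nu_i$, is at least the matching contributed by the other parts. I would use the explicit formula $f(\nu,\Delta)=\nu\Delta+\lfloor\Delta/2\rfloor\lfloor\nu/\lceil\Delta/2\rceil\rfloor$ to check that trading matching size for degree never beats $f(k-1,k-1)$: roughly, $\sum_i \nu_i(k-N+\nu_i)\le (k-1)k$ whenever $N\le k-1+\min_i\nu_i$. This numerical comparison, with the floor terms handled carefully, is the step I expect to be the main obstacle. I would first try to reduce it to $\Delta\nu+\nu$-type estimates, and treat the small cases, where the floors matter, by hand.

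\emph{Property (iii).} If $\nu_i\ge2$ for all $i$, then $s_i\ge 2(r-1)\ge2$, so every part has degree cap at most $k-3$. The same comparison as in (i) then holds with strict inequality. The extremal configurations of Lemma~\ref{lem:AHS} have maximum degree exactly $k-1$ and matching number $k-1$, and neither can be reached when every cap is at most $k-3$. Hence $g(G')<f(k-1,k-1)$.
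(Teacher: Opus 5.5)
Your proof of (ii) is correct. The gap is in (i), and it carries over to (iii).

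\textbf{The false degree cap.} The bullet ``By (ii), $G'[V_i']$ has maximum degree at most $k-1-s_i$'' misreads (ii). What (ii) bounds is $d_{V_i'}(x)-m(x)$, so you only get $d_{V_i'}(x)\le k-1-s_i+m(x)$. The only unconditional cap on inside degrees is $k-1$, from condition (i) of the good partition. You had already discarded $m(S)$ through $g\le e_{\rm in}$, which throws away exactly the term that pays for large inside degrees. As a result, the inequality you then try to prove, $\sum_i e(G'[V_i'])\le f(k-1,k-1)$, is false.

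\textbf{Counterexample.} Take $r=2$ and $k\ge4$. Let each $V_i$ induce $k-2$ disjoint copies of $K_{1,k-1}$. Join $V_1$ completely to $V_2$, except for the $(k-2)^2$ pairs of star centres. Each part has maximum degree $k-1$ and matching number $k-2$. A centre has inside degree $k-1$ and an independent neighbourhood (the leaves) in the other part. A leaf has inside degree $1$ and sees a whole part of matching number $k-2$. So the partition is $k$-good.

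A set $T$ with $\alpha$ centres in $V_1$ and $\beta$ in $V_2$ has $g(T)\le (k-1)(\alpha+\beta)-\alpha\beta$. This is strictly increasing in $\alpha,\beta\in[0,k-2]$, with equality only if $T$ contains all leaves of its centres. Hence $G'=G$ and $g(G')=k(k-2)$, consistent with the lemma.

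Here $s_i=k-2$, so your cap equals $1$, yet the centres have inside degree $k-1$. Moreover $\sum_i e(G'[V_i'])=2(k-1)(k-2)>f(k-1,k-1)$; for $k=4$ this reads $12>10$. So no numerical comparison of the numbers $f(\nu_i,k-1-N+\nu_i)$ can finish the proof.

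\textbf{What is actually needed for (i).} Split on $N=\sum_i\nu_i$.
\begin{enumerate}
\item If $N\le k-1$, then $\bigcup_i G'[V_i']$ has matching number $N\le k-1$ and maximum degree at most $k-1$. Hence $g\le e_{\rm in}\le f(k-1,k-1)$ directly.
\item The case $N\ge k$ does occur (in the example $N=2k-4$). There you must keep the missing cross-edges. For instance, use $2g=\sum_x\bigl(d_{V_i'}(x)-m(x)\bigr)$ with $1\le d_{V_i'}(x)-m(x)\le\min\{d_{V_i'}(x),\,k-1-s_i\}$. Then control these capped degree sums part by part via the structure of a maximum matching of $G'[V_i']$. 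This is the real content of (i), and it is absent from your plan.
\end{enumerate}

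\textbf{Part (iii).} Your (iii) rests on the same false cap. Also, ``the extremal configurations cannot be reached'' is not an argument by itself. In the case $N\le k-1$ a correct version goes as follows. All $\nu_j\ge2$ gives $k-1-s_i\le k-3$. Either every inside degree is at most $k-3$, so $e_{\rm in}\le f(k-1,k-3)\le (k-1)(k-2)<f(k-1,k-1)$. Or some $x$ has $d_{V_i'}(x)>k-1-s_i$, which forces $m(x)\ge1$ and hence $g\le e_{\rm in}-1$. The case $N\ge k$ again needs the missing-edge accounting.
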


Given a graph $H$, a graph $G$ is called $H$-free if it contains no copy of $H$ as a subgraph.
Let $\mathcal{G}_{n,k,r}$ denote a family of graphs, each of which is obtained from Tur\'an graph $T_r(n)$ by embedding a graph with $2k-1$ vertices, $k^2 - \frac{3k}{2}$ edges with maximum degree $k-1$ in one partite set if $k$ is even and embedding two vertex disjoint copies of $K_k$ in one partite set if $k$ is odd. By convention, we have $\mathcal G_{n,1,r}=\{T_r(n)\}$.

\begin{lemma}[\cite{HLZ}]
\label{thm:HLZ-extremal}
Let $k\geq1$ and $r\geq2$ be integers. For every $i\in[k]$, let $H_i$ be an edge-critical graph with $\chi(H_i)=r$, and let
$\mathcal{H}_k=\{H_1,\ldots,H_k\}+v$.
Then, for sufficiently large $n$,
$\ex(n,\mathcal{H}_k)=t_r(n)+f(k-1,k-1)$.
Moreover, $\mathcal{G}_{n,k,r}$ is the family of extremal graphs for
$\mathcal{H}_k$.
\end{lemma}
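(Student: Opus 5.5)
The statement has two halves: a lower bound, which I would get by checking that every member of $\mathcal G_{n,k,r}$ is $\mathcal H_k$-free, and an upper bound, which I would get by stability plus an embedding argument. The link between them is the $k$-good partition of Definition~\ref{def:good-partition}.

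\textbf{Lower bound.} I would first prove a counting principle: a graph with a $k$-good partition $V_1\cup\cdots\cup V_r$ contains no copy of $\mathcal H_k$.

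Let $v\in V_i$ be the centre of a putative copy. Each branch $H_\ell+v$ has chromatic number $r+1$, so colouring its vertices by the part containing them cannot be proper. Hence the branch contains an edge inside some part. There are only two ways this can happen.
\begin{itemize}
\item The edge is $vx$ with $x\in V_i$, because $v$ is adjacent to every vertex of $H_\ell$.
\item The edge is $xy\subseteq V_j$ with $x,y\in V(H_\ell)$, and then $xy$ lies in $G[N_G(v)\cap V_j]$.
\end{itemize}
Choose one such edge per branch. The branches are vertex-disjoint apart from $v$, so the chosen edges are distinct edges at $v$ together with matchings in the graphs $G[N_G(v)\cap V_j]$. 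This gives $d_{V_i}(v)+\sum_{j\neq i}\nu(G[N_G(v)\cap V_j])\ge k$, contradicting condition (iii).

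For a graph in $\mathcal G_{n,k,r}$, take its Tur\'an partition with the embedded graph placed in one part. Conditions (i)--(iii) hold because the embedded graph has maximum degree $k-1$ and matching number $k-1$, by Lemma~\ref{lem:AHS}. Its edge count is $t_r(n)+f(k-1,k-1)$, which gives the lower bound.

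\textbf{Upper bound.} Let $G$ be $\mathcal H_k$-free with $e(G)\ge t_r(n)+f(k-1,k-1)$. The steps are as follows.
\begin{enumerate}
\item Since $\chi(\mathcal H_k)=r+1$, the Erd\H{o}s--Simonovits stability theorem makes $G$ $o(n^2)$-close to $T_r(n)$.
\item Repeatedly delete vertices of degree below $(1-\frac1r-\varepsilon)n$. Each deletion raises the excess over $t_r$ by a linear amount, so a progressive-induction count shows no vertices are deleted once $n$ is large.
\item Take a maximum $r$-cut $V_1\cup\cdots\cup V_r$. Using the minimum degree, show that every part has size $n/r\pm o(n)$, each vertex has only $o(n)$ neighbours in its own part, and all but $o(n)$ of its non-neighbours lie in its own part.
\end{enumerate}

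\textbf{Key step: the partition is $k$-good.} Suppose condition (iii) fails at some $u$. Then $u$ has at least $k$ "bad" items: own-part neighbours, and edges of a matching in $N(u)\cap V_j$. Build $k$ branches centred at $u$, each using one bad item as the image of a critical edge $e_\ell$ of $H_\ell$. Since $H_\ell-e_\ell$ is $(r-1)$-colourable, the rest of $H_\ell$ embeds greedily into the large common neighbourhoods across the other parts, avoiding previously used vertices. For this I would prove an embedding lemma: in the nearly complete $r$-partite structure, any bounded set of vertices each of linear degree into every part has $\Omega(n)$ common neighbours in each part.

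Violations of (i) and (ii) reduce to (iii), or are handled by the same embedding with the centre chosen in a suitable part.

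\textbf{Conclusion.} With a $k$-good partition, Lemma~\ref{maximal}(i) gives
\[
e(G)\le \sum_{i<j}|V_i||V_j|+f(k-1,k-1)\le t_r(n)+f(k-1,k-1).
\]
Equality forces the partition to be balanced, with no missing cross-edges, and with the inside edges forming an extremal graph for $f(k-1,k-1)$. Lemma~\ref{maximal}(iii) rules out spreading these edges over several parts, which yields $\mathcal G_{n,k,r}$.

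\textbf{Main obstacle.} I expect the hardest part to be the embedding step. It must handle vertices with linearly many missing cross-edges, and the interplay between own-part edges and matchings in different parts. My first attempt would be a cleaning argument that separates out the $O(1)$ atypical vertices and shows that each of them alone already forces an $\mathcal H_k$.
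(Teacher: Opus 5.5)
Your derivation of the value $\ex(n,\mathcal H_k)=t_r(n)+f(k-1,k-1)$ is sound; the paper itself gives no proof and only cites \cite{HLZ}. The counting principle (a $k$-good partition excludes $\mathcal H_k$) gives the lower bound. Stability, degree cleaning, a max $r$-cut, the seed-edge embedding for $k$-goodness and Lemma~\ref{maximal}(i) give the upper bound. One small correction: the greedy embedding needs every vertex to miss fewer than $\frac{n}{2rh}$ vertices of each other part, where $h=\max_i|V(H_i+v)|$. ``Linear degree into every part'' alone does not give common neighbours. Your bounds do supply this once $\varepsilon$ is small in terms of $r,h$, but the error term there is of order $\varepsilon n$, not $o(n)$.

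The genuine gap is the ``Moreover'' clause. Equality in $e(G)\le\sum_{i<j}|V_i||V_j|+f(k-1,k-1)\le t_r(n)+f(k-1,k-1)$ yields only two facts: the parts are balanced, and $e_{\mathrm{in}}(G)-m(G)=f(k-1,k-1)$, with $e_{\mathrm{in}}$ counting edges inside parts and $m$ counting missing cross-edges. It gives neither $m(G)=0$ nor that all inside edges lie in one part. Lemma~\ref{maximal}(iii) excludes only the case that \emph{every} part of $G'$ has matching number at least $2$. You must still rule out, for instance, a single edge or star in a second part paid for by missing cross-edges. You must also rule out net-zero configurations in $G-V(G')$, which the minimal subgraph $G'$ cannot see. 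This needs a real argument, for example via Lemma~\ref{maximal}(ii), condition (iii) of $k$-goodness and the Chv\'atal--Hanson bound.

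More seriously, the last inference (``the inside edges form an $f(k-1,k-1)$-extremal graph, hence $G\in\mathcal G_{n,k,r}$'') misreads Lemma~\ref{lem:AHS}, which exhibits only \emph{an} extremal graph. For odd $k$ you would additionally need $2K_k$ to be the unique $f(k-1,k-1)$-extremal graph. For even $k\ge4$ the identification is false. Take $k=4$ and let $W$ be the cycle $v_1v_2v_3v_4v_5$ with chords $v_1v_3$ and $v_2v_4$. Then $K_3\cup W$ has $8$ vertices, $10=f(3,3)$ edges, maximum degree $3$ and matching number $3$. Placing it inside one part of $T_r(n)$ gives a $4$-good partition. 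By your own counting principle the resulting graph is $\mathcal H_4$-free with $t_r(n)+f(3,3)$ edges, hence extremal. For large $n$ its $r$-partition is forced, and the embedded graph has $8\neq 2k-1$ vertices, so it is not in $\mathcal G_{n,4,r}$. The same works for every even $k\ge4$: take $K_{1,k-1}$ together with a graph on $2k-3$ vertices in which all degrees are $k-1$ except one of degree $k-2$. Even after the gaps above are filled, your method can at best show that the extremal graphs are $T_r(n)$ with an arbitrary $f(k-1,k-1)$-extremal graph in one part. The clause with $\mathcal G_{n,k,r}$ as defined in the paper cannot be proved as stated.
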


\begin{lemma}[\cite{HLZ}]
\label{lem:center-critical}
Let $H$ be an edge-critical graph with $\chi(H)=r\geq2$, and let $H^*=H+u$.
If $v_1v_2$ is a critical edge of $H$, then both $uv_1$ and $uv_2$ are
critical edges of $H^*$.
\end{lemma}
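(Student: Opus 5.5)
The plan is to compute $\chi(H^*)$ directly and then exhibit an explicit proper $r$-coloring of $H^*-uv_1$. By symmetry of the roles of $v_1$ and $v_2$, the same construction then handles $H^*-uv_2$.

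First, I will note that $\chi(H^*)=r+1$. The vertex $u$ is adjacent to every vertex of $H$, so in any proper coloring of $H^*$ the color of $u$ appears nowhere on $H$. Hence $\chi(H^*)=\chi(H)+1=r+1$. So it suffices to show $\chi(H^*-uv_1)\le r$.

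Next, since $v_1v_2$ is a critical edge of $H$, the graph $H-v_1v_2$ admits a proper coloring $c$ with colors in $[r-1]$. In any such coloring we must have $c(v_1)=c(v_2)$. Otherwise $c$ would also be a proper coloring of $H$ itself with $r-1$ colors, contradicting $\chi(H)=r$.

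Now I modify $c$ as follows:
\begin{itemize}
\item recolor $v_1$ with the new color $r$ and keep all other colors on $H$;
\item give $u$ the color $c(v_1)$, the common color of $v_1$ and $v_2$ under $c$.
\end{itemize}
The idea is that $v_1$ becomes the only vertex of $H$ with color $r$, and $u$ takes over the color $v_1$ vacated.

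I then check that this is a proper $r$-coloring of $H^*-uv_1$ with colors in $[r]$:
\begin{itemize}
\item Inside $H$, $v_1$ is the only vertex of color $r$, so all edges at $v_1$, including $v_1v_2$, are properly colored. All other edges of $H$ were already properly colored by $c$.
\item Every edge at $u$ in $H^*-uv_1$ joins $u$ to a vertex $w\neq v_1$ of $H$. Such a $w$ keeps its original color $c(w)\in[r-1]$.
\end{itemize}

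The second item is where the main obstacle lies. It requires $c(w)\ne c(v_1)$ for every neighbor $w\neq v_1$ of $u$. This fails, for example, at $w=v_2$, since $c(v_2)=c(v_1)$. So giving $u$ the color $c(v_1)$ is not right, and I correct the assignment: give $u$ the fresh color $r$ and keep $v_1$ at its original color $c(v_1)\in[r-1]$.

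With the corrected assignment, the check goes through:
\begin{itemize}
\item On $H$ the coloring is $c$, but $c(v_1)=c(v_2)$, so the edge $v_1v_2$ of $H$ is not properly colored. The $H$-part must therefore still be fixed.
\item To fix it, recolor $v_1$ with color $r$ as well, and keep $u$ at color $r$. The vertices $u$ and $v_1$ share color $r$, which is allowed precisely because $uv_1$ has been deleted.
\item On $H$, $v_1$ is now the unique vertex of color $r$, so every edge of $H$ is properly colored, including $v_1v_2$.
\item $u$ is adjacent only to vertices $w\ne v_1$ of $H$, and these all carry colors in $[r-1]$.
\end{itemize}
Hence $\chi(H^*-uv_1)\le r<r+1=\chi(H^*)$, so $uv_1$ is a critical edge of $H^*$. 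Exchanging $v_1$ and $v_2$ shows that $uv_2$ is critical as well.

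The only real subtlety is the correct placement of the extra color $r$. It must be shared by $u$ and the endpoint of the deleted edge $uv_1$; the naive choices of giving $u$ the vacated color, or leaving $v_1$ at its original color, each break an edge.
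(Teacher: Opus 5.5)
Your final construction is correct. Since $H-v_1\subseteq H-v_1v_2$ is $(r-1)$-colorable, giving both $v_1$ and $u$ the new color $r$ yields a proper $r$-coloring of $H^*-uv_1$, so $\chi(H^*-uv_1)\le r<r+1=\chi(H^*)$. The paper gives no proof of this lemma; it only quotes it from \cite{HLZ}. Your argument is the standard one, and its color class $\{u,v_1\}$ is exactly the structure the paper later relies on in Lemma~\ref{lem:extension}. In a write-up, remove the two abandoned attempts and the observation $c(v_1)=c(v_2)$, which the final coloring never uses.
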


\begin{lemma}[\cite{HLZ}]
\label{thm:HLZ-good}
Let $k,r\geq2$ be fixed integers, and let $\mathcal{H}_k=\{H_1,\ldots,H_k\}+v$, where every $H_i$ is edge-critical and satisfies $\chi(H_i)=r$. 
If $G$ is an $\mathcal{H}_k$-free graph on $n$ vertices and $\delta(G)\geq \frac{r-1}{r}n-k$,
then $G$ admits a $k$-good partition for sufficiently large $n$.
\end{lemma}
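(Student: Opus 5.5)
The plan is to prove Lemma~\ref{thm:HLZ-good} with the usual stability-plus-cleaning argument, using critical edges to turn any violation of a $k$-good condition into a copy of $\mathcal{H}_k$.

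\emph{Stability and choice of partition.} Each branch $H_i+v$ has chromatic number $r+1$, so $\chi(\mathcal{H}_k)=r+1$. The hypothesis $\delta(G)\ge \frac{r-1}{r}n-k$ gives $e(G)\ge t_r(n)-O(n)$. Since $G$ is $\mathcal{H}_k$-free, the Erd\H{o}s--Simonovits stability theorem says $G$ is $o(n^2)$-close to $T_r(n)$. Take a partition $V(G)=V_1\cup\cdots\cup V_r$ maximizing the number of cross edges. Then $\sum_i e(G[V_i])=o(n^2)$ and $|V_i|=n/r\pm o(n)$. Maximality gives $d_{V_i}(u)\le d_{V_j}(u)$ for every $u\in V_i$ and $j\ne i$. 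Combined with the minimum degree, a vertex with $d_{V_i}(u)\ge \varepsilon n$ would have about $n/r$ neighbours in every part.

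\emph{Cleaning.} Call a vertex \emph{bad} if it has at least $\varepsilon n$ non-neighbours in some other part, or at least $\varepsilon n$ neighbours in its own part. There are only $o(n)$ bad vertices. The first task is to show there are none. Suppose $u$ is bad; the minimum degree condition still gives $u$ linearly many neighbours in many parts. Inside the good vertices of $N(u)$, use the Erd\H{o}s--Stone theorem for the $r$-partite structure to find $k$ vertex-disjoint copies of $H_i$, each within the common neighbourhood of $u$. This gives a copy of $\mathcal{H}_k$, a contradiction. So every vertex is good. Consequently, any bounded set of vertices has common neighbourhood of size $\Omega(n)$ in each part, apart from parts containing vertices of the set.

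\emph{Embedding via critical edges.} This step checks conditions (i)--(iii); it suffices to derive a contradiction from a violation of (iii) or (ii), since (iii) implies (i). Suppose $u\in V_i$ has $d_{V_i}(u)+\sum_{j\ne i}\nu(G[N(u)\cap V_j])\ge k$. This supplies $k$ disjoint ``anchor'' edges at $u$: edges $uw$ with $w\in V_i$, and matching edges $xy$ inside $N(u)\cap V_j$.
\begin{itemize}
\item For an anchor edge $uw$, use Lemma~\ref{lem:center-critical}. If $v_1v_2$ is a critical edge of $H_\ell$, then $uv_1$ is critical in $H_\ell+u$. Hence $(H_\ell+u)-uv_1$ is $r$-colourable with $u$ and $v_1$ in the same class. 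Map $v_1\mapsto w$, and place the remaining vertices greedily into the common neighbourhoods in the appropriate parts.
\item For an anchor edge $xy\subseteq V_j\cap N(u)$, map the critical edge $v_1v_2$ of $H_\ell$ to $xy$. Colour $H_\ell-v_1v_2$ with $r-1$ colours, avoiding the class of $u$ and putting $v_1,v_2$ in the class of $V_j$. Embed the rest greedily into common neighbourhoods of $u,x,y$.
\end{itemize}
Each branch uses boundedly many vertices and all common neighbourhoods are linear, so the $k$ branches can be made disjoint, giving $\mathcal{H}_k$. Condition (ii) is handled the same way. If $\sum_{j\ne i}\nu(G[V_j])\ge k$, choose a centre $u\in V_i$ adjacent to all $2k$ matching endpoints; such a vertex exists because all vertices are good and the endpoints have $\Omega(n)$ common neighbours in $V_i$. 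Then proceed as in the second case above.

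\emph{Main obstacle.} The hardest step is the cleaning step: showing there are no bad vertices when only $\delta(G)\ge\frac{r-1}{r}n-k$ is known. The plan is to first delete the bad set $B$, show $G-B$ has the good structure, and then re-insert each $x\in B$. The degree bound forces $x$ either to have linear degree into every part, which gives the Erd\H{o}s--Stone contradiction above, or to fit as a good vertex of some part after reassignment. That reassignment has to stay consistent with the max-cut choice.
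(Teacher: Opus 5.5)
The paper does not prove this lemma; it is imported from \cite{HLZ}, so there is no in-paper argument to compare against. Your outline is the standard stability/max-cut route and is sound. Your embedding step is the same mechanism the paper later isolates in Lemma~\ref{lem:extension}: star edges $uw\subseteq V_i$ are its type-II seed edges (handled through Lemma~\ref{lem:center-critical}), and matching edges in $N(u)\cap V_j$ are its type-I seed edges. Your reduction of a violation of (ii) to a violation of (iii), and the observation that (iii) implies (i), are correct.

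The step you flag as the main obstacle is left open in your write-up, but it is not a real obstacle, and the delete/re-insert/reassign plan is unnecessary, because the max-cut choice already rules out bad vertices.

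\emph{Large own-part degree.} Take $u\in V_i$ with $d_{V_i}(u)\ge\varepsilon n$. Maximality gives $d_{V_j}(u)\ge\varepsilon n$ for every $j$. That is what you actually get, not ``about $n/r$'', and it is enough. Only $o(n^2)$ cross pairs are missing. Discard the $o(n)$ vertices incident with at least $\delta n$ missing cross pairs. Here $\delta$ must be much smaller than $\varepsilon$, not equal to it as in your definition of ``good''. Embedding greedily then gives $K_r(t)$ inside $N(u)$ for any fixed $t$, hence $k$ disjoint copies of the $H_\ell$ in $N(u)$, hence a copy of $\mathcal{H}_k$. So $d_{V_i}(u)<\varepsilon n$ for every $u$.

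\emph{Cross non-neighbours.} Given that, the degree bound yields
\[
\sum_{j\neq i}\bigl(|V_j|-d_{V_j}(u)\bigr)=n-|V_i|-d(u)+d_{V_i}(u)\le \frac nr-|V_i|+k+\varepsilon n\le 2\varepsilon n
\]
for large $n$, since $|V_i|\ge\frac nr-o(n)$. Read the same identity in reverse: a vertex with at least $\varepsilon n$ cross non-neighbours has $d_{V_i}(u)\ge\varepsilon n-o(n)-k$. It therefore falls under the first case, with $\varepsilon/2$ in place of $\varepsilon$, and has linear degree into every part. So the second branch of your dichotomy, a bad vertex that must be moved to another part, never occurs.

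\emph{Conclusion.} Every vertex is good. Put $h=\max_\ell|V(H_\ell+v)|$ and choose $\varepsilon\ll 1/(rkh)$. Then any set of at most $kh$ vertices has at least $\frac nr-o(n)-2kh\varepsilon n$ common neighbours in each part containing none of them. That is all your greedy embedding of the $k$ branches requires.
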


\section{The large-minimum-degree case}\label{s3}

To prove the upper bound in Theorem~\ref{thm:main}, we first consider edge-colorings for which every representing graph has large minimum
degree. In this setting, Lemma~\ref{thm:HLZ-good} provides a good partition, whose structural properties allow us to control the edges
inside and between its parts. Our main tool is an embedding lemma that extends critical edges to branches sharing a common
center. 

\begin{definition}\label{de:extension}
Let $G$ be a graph with a partition $V(G)=V_1\cup\cdots\cup V_r$, and let $z\in V_s$. 
\begin{enumerate}[label=\textup{(\alph*)}]
\item If there exist $x, y\in V_p$ for some $p\in[r]\setminus\{s\}$ such that $z\sim x, z\sim y,$ and $x\sim y$, then we call $xy$ a \emph{type-I seed edge}.

\item If there exists $w\in V_s$ such that $w\sim z$, then call $zw$ a \emph{type-I\!I seed edge}.
\end{enumerate}
\end{definition}

\begin{lemma}\label{lem:extension}
Let $t\geq 1$ and $r\geq 2$, and let $\mathcal H_t=\{H_1,\ldots,H_t\}+v$, where each $H_i$ is an edge-critical graph satisfying
$\chi(H_i)=r$. Put $h=\max\limits_{i\in[t]}|V(H_i+v)|$.
Let $G$ be a graph admitting a partition $V(G)=V_1\cup V_2\cup\cdots\cup V_r$.
Fix a vertex $z\in V_s$ for some $s\in[r]$ and a nonnegative integer $b$.
Assume that the following conditions hold:
\begin{enumerate}[label=\textup{(\roman*)}]

\item The spanning $r$-partite subgraph $G_{\rm cr}$ satisfies $e(G_{\rm cr})\ge \sum\limits_{1\le i<j\le r}|V_i||V_j|-b;$

\item There exist t seed edges in $G$ such that the edges of type-I and type-I\!I among them induce, respectively, a matching and a star graph; the matching and the star graph are vertex-disjoint;

\item $\min\limits_{j\in[r]}|V_j|>b+t(h-1)$.
\end{enumerate}
Then $G$ contains a copy of $\mathcal H_t$ with center $z$.
\end{lemma}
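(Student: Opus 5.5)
We build the copy of $\mathcal H_t$ one branch at a time, each branch grown from one of the $t$ seed edges given by (ii). Fix a critical edge $a_ib_i$ in each $H_i$. Since $\chi(H_i-a_ib_i)=r-1$, fix a proper $(r-1)$-colouring of $H_i-a_ib_i$. In it, $a_i$ and $b_i$ receive the same colour, because otherwise this would be a proper $(r-1)$-colouring of $H_i$. Hence $H_i+v$ minus the single edge $a_ib_i$ has a proper $r$-colouring in which $v$ gets its own colour class and $a_i,b_i$ share a class. Using Lemma~\ref{lem:center-critical}, the same is true with the critical edge $va_i$ of $H_i+v$ in place of $a_ib_i$: $H_i+v-va_i$ has a proper $r$-colouring with $v$ and $a_i$ in the same class. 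So every branch is "$r$-partite plus one edge", and that one edge may be taken either inside $H_i$ (not touching the centre) or incident to the centre.

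Seeds are matched to branches as follows. For a type-I seed edge $xy$ with $x,y\in V_p$, $p\neq s$, we map $v\mapsto z$, $a_i\mapsto x$, $b_i\mapsto y$. We then assign the colour class of $a_i,b_i$ to part $V_p$, the class of $v$ to $V_s$, and the remaining $r-2$ classes bijectively to the other parts. The edges $zx,zy,xy$ are present by the definition of a type-I seed. For a type-II seed edge $zw$ with $w\in V_s$, we map $v\mapsto z$, $a_i\mapsto w$, and put the class containing $v$ and $a_i$ into $V_s$. The edge $zw$ supplies the critical edge $va_i$. Every other edge of the branch then joins two different colour classes, so it must be realised as a cross edge of $G_{\rm cr}$ between the corresponding parts.

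Condition (ii) makes the pre-fixed vertices compatible. The type-I seeds form a matching, so distinct branches use distinct $x,y$. The type-II seeds form a star; since each contains $z$, the star is centred at $z$ and the leaves $w$ are distinct. Finally, the matching and the star are vertex-disjoint, so the images of $a_i$ and $b_i$ are pairwise distinct over all branches and avoid $z$ except where intended.

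The remaining vertices are embedded greedily, one at a time, branch by branch. Each new vertex $u$ of branch $i$ must go into its prescribed part $V_q$. It must avoid all previously used vertices, of which there are at most $t(h-1)$. It must also be adjacent in $G_{\rm cr}$ to its at most $h-1$ already embedded neighbours, all of which lie in parts other than $V_q$.

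The main obstacle is this counting step. By (i), at most $b$ cross pairs are missing in total. Each missing cross pair incident to an embedded neighbour kills at most one candidate in $V_q$. So at most $b$ candidates fail adjacency and at most $t(h-1)$ are already used. Since $|V_q|>b+t(h-1)$ by (iii), a valid choice always exists.

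The subtle point is that the missing-edge budget $b$ must not be multiplied by the number of neighbours. This is fine, because we count missing pairs globally: each missing pair excludes at most one candidate, whichever embedded neighbour it meets.

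One must also check that no required edge lies inside a part, other than the seed edge itself. This holds because the colouring is proper except for the critical edge, and the centre's class contains no neighbour of $v$ other than $a_i$ in the type-II case.

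Completing all $t$ branches in this way gives a copy of $\mathcal H_t$ with centre $z$.
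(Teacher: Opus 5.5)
Your proposal is correct and takes essentially the same route as the paper. It uses the same colour-class assignment for type-I and type-II seeds: the critical edge $a_ib_i$ is sent into $V_p$, and the critical edge $va_i$ (via Lemma~\ref{lem:center-critical}) is sent into $V_s$. It then uses the same greedy embedding, with the same global observation that each missing cross pair excludes at most one candidate, for at most $b$ exclusions. The only cosmetic difference is the vertex count: you pre-fix all seed endpoints and bound the occupied vertices at once by $|V(\mathcal H_t)|-1\le t(h-1)$, whereas the paper counts branch by branch as $b+(i-1)(h-1)+(h-2)+2(t-i)$ and needs $h\ge 3$ to reach the same bound.
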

\begin{proof}
Suppose that $G$ is an $n$-vertex graph admitting a partition $V(G)=V_1\cup V_2\cup\cdots\cup V_r$ satisfying {\rm(i)}, {\rm(ii)} and {\rm(iii)}.
Let $e_1,\ldots,e_t$ be the $t$ seed edges given by condition (ii). Our aim is to embed $\mathcal H_t$ into $G$ to get the desired copy. To this end, we will successively employ the seed edge $e_i$ to construct a copy of $H_i+v$ centered at $z$ for every $i\in[t]$. 

Throughout the proof, when a vertex $u\in V(H_i+v)$ is embedded onto a vertex $x\in V(G)$, we call $x$ the \emph{image} of $u$. We prove our result through the following three steps.

\medskip
\noindent\textbf{Step 1.} Assigning the vertices of $H_i+v$ to the parts $V_1,\ldots,V_r$.
\vspace{3mm}

Fix $i\in[t]$. We assign the vertices of $H_i+v$ to the parts $V_1,\ldots,V_r$ according to the type of seed edge $e_i$.
Suppose that $e_i$ is of type-I. By Definition~\ref{de:extension}, we may write $e_i=x_iy_i\in E(G[V_{p_i}])$
for some $p_i\in[r]\setminus\{s\}$ such that $zx_i,zy_i\in E(G)$. Since $H_i$ is edge-critical, choose a critical edge $a_ib_i$ of $H_i$. So we have $\chi(H_i-a_ib_i)=r-1$.
Thus there exists a proper $(r-1)$-coloring of $H_i-a_ib_i$. Moreover, the vertices $a_i$ and $b_i$ must receive the same color.
Otherwise adding the edge $a_ib_i$ would still give a proper $(r-1)$-coloring of $H_i$.

Assign the color class containing $a_i$ and $b_i$ to $V_{p_i}$, and assign the remaining $r-2$ color classes
bijectively to the parts $\{V_j:j\in[r]\setminus\{s,p_i\}\}.$

Assign the center $v$ to $V_s$, and set
\[
a_i\mapsto x_i,\qquad b_i\mapsto y_i,\qquad v\mapsto z.
\]
Every edge of $H_i+v$ other than $a_ib_i$ then has its endpoints assigned to distinct parts. The critical edge $a_ib_i$ is mapped
to the seed edge $e_i=x_iy_i$, while the edges $va_i$ and $vb_i$ are realized by $zx_i$ and $zy_i$, respectively.

Now suppose that $e_i$ is of type-I\!I. By Definition~\ref{de:extension}, we may write $e_i=zw_i\in E(G[V_s])$.
Since $H_i$ is edge-critical, Lemma~\ref{lem:center-critical} implies that $H_i+v$ contains a critical edge incident with the center $v$, denoted by $va_i$. So we have $\chi((H_i+v)-va_i)=r.$

Fix a proper $r$-coloring of $(H_i+v)-va_i$. The vertices $v$ and $a_i$ must receive the same color. Otherwise, restoring
$va_i$ would give a proper $r$-coloring of $H_i+v$. Moreover, the color class containing $v$ and $a_i$ is exactly
$\{v,a_i\}$, since $v$ is adjacent to every vertex of $H_i$ other than $a_i$ in $(H_i+v)-va_i$.

Assign the color class $\{v,a_i\}$ to $V_s$, and assign the remaining $r-1$ color classes bijectively to the remaining
$r-1$ parts. Set
\[
v\mapsto z,\qquad a_i\mapsto w_i.
\]
Again, every edge of $H_i+v$ other than $va_i$ has its endpoints assigned to distinct parts, while the critical edge $va_i$ is
mapped to the seed edge $e_i=zw_i$.

Thus, in either case, the vertices of $H_i+v$ can be assigned to the parts $V_1,\ldots,V_r$. 
\vspace{3mm}

\medskip
\noindent\textbf{Step 2.}  Choose images for the vertices of $H_i+v$ within their assigned parts.
\vspace{3mm}

By Step 1, the images of the center $v$ and the endpoints of the chosen critical edge have already been fixed: the center $v$ is
mapped to $z$, while the endpoints of the critical edge are mapped to the endpoint(s) of the seed edge $e_i$.

We now choose images for all remaining vertices of $H_i+v$ one by one within their assigned parts. 
Let $u$ be the next vertex to be embedded, and suppose that $u$ is assigned to $V_j$. Note that $j\neq s$. If some neighbors of $u$ have already
been embedded, then the image of $u$ must be adjacent to their images. In particular, since $v$ has already been mapped to $z$ and $uv\in E(H_i+v)$, the image of $u$ must be adjacent to $z$.

Therefore, a vertex $x\in V_j$ cannot be chosen as the image of $u$ if $x$ is not adjacent to the image of some already embedded
neighbor of $u$. 
Since every neighbor of $u$ is assigned to a part different from $V_j$, this means that $x$ is incident with a missing
cross-edge. By condition (i), there are at most $b$ missing cross-edges in total. Since each such missing cross-edge has at most one endpoint in
$V_j$, at most $b$ vertices of $V_j$ are excluded by the adjacency relations of $H_i+v$.

Thus, when choosing an image for each vertex of $H_i+v$, at most $b$ vertices in the assigned part are excluded.
\vspace{3mm}

\medskip
\noindent\textbf{Step 3.} Embed the branches $H_1+v, H_2+v, \ldots, H_t+v$ successively.
\vspace{3mm}

We first embed the branch $H_1+v$. By Step 1, the center $v$ is mapped to $z$, and the chosen critical edge of $H_1+v$ is realized
by the seed edge $e_1$.

Consider a remaining vertex $u$ of $H_1+v$, and suppose that $u$ is assigned to $V_j$. By Step 2, at most $b$ vertices of $V_j$ are excluded. In addition, to ensure that distinct vertices of $H_1+v$ receive distinct images, we must avoid the images of the non-central vertices
of $H_1+v$ that have already been embedded. We also avoid all endpoints of the seed edges $e_2,\ldots,e_t$ so that these seed edges remain available
for embedding the subsequent branches. Therefore, there are at most
\[
\begin{aligned}
b+(h-2)+2(t-1)
&=b+t(h-1)-1-(t-1)(h-3)\\
&\leq b+t(h-1)-1
\end{aligned}
\]
unavailable vertices in $V_j$, where the inequality follows from $t\geq1$ and $h\geq3$. 

Hence, by condition~{\rm (iii)}, a suitable image for $u$ can always be chosen. Repeating this procedure, we can embed all the remaining vertices
of $H_1+v$ into their assigned parts. Since each vertex is chosen to be adjacent to all its previously embedded neighbors, together with the seed edge $e_1$ this gives a copy of $H_1+v$ in $G$.




We next consider a general branch $H_i+v$, where $2\leq i\leq t$. The branches $H_1+v,\ldots,H_{i-1}+v$ have already been embedded into $G$ such that their copies have the common center $z$, are pairwise vertex-disjoint outside $z$ and do not use any endpoints of the seed edges $e_i,\ldots,e_t$ other than $z$.

We apply the same embedding procedure as for $H_1+v$. At each step, at most $b$ vertices in the assigned part are
excluded. Moreover,  we must avoid the non-central vertices used by the
copies of $H_1+v,\ldots,H_{i-1}+v$, the images of non-central vertices of the current branch $H_i+v$ that have already been embedded, and all endpoints of the seed edges $e_{i+1},\ldots,e_t$.
Hence, there are at most
\[
\begin{aligned}
b+(i-1)(h-1)+(h-2)+2(t-i)
    &= b+(h-3)i+2t-1\\
    &\leq b+t(h-1)-1
    \qquad (\text{since } h\geq 3 \text{ and } i\leq t)
\end{aligned}
\]
unavailable vertices in the assigned part. Therefore, condition~{\rm (iii)} guarantees that the embedding of $H_i+v$ can be completed.

Repeating this procedure for every $i\in[t]$, we obtain copies of
$H_1+v,\ldots,H_t+v$ that have the common center $z$, are pairwise vertex-disjoint outside $z$. Moreover, for every $i\in[t]$, the seed edge $e_i$ represents a critical edge of $H_i+v$ in the constructed copy. Hence their union is the required copy of $\mathcal H_t$.
\end{proof}

A standard device in anti-Ramsey problems is a representing graph. Given an edge-coloring $c$ of $K_n$, a \textit{representing graph} is a spanning 
subgraph of $K_n$ obtained by choosing exactly one edge of each color. We denote the family of all representing graphs by $\mathcal{R}(c,K_n)$. Then
every graph in $\mathcal{R}(c,K_n)$ is rainbow and has exactly $|c(E(K_n))|$ edges.
We now apply the preceding embedding lemma to establish the upper bound under the large-minimum-degree condition.

\begin{lemma}\label{lem:high-degree}
Let $k\geq2, r\ge2$ be fixed integers, and let $\mathcal H_{k+1}=\{H_1,\ldots,H_{k+1}\}+v$, where each $H_i$ is an edge-critical graph satisfying $\chi(H_i)=r$. 
Let $c$ be an edge-coloring of $K_n$  with no rainbow copy of $\calH_{k+1}$ and  $n\geq g\bigl(r, |V(H_1)|, \ldots, |V(H_{k+1})|\bigr)$. If the minimum degree of every representing graph 
is at least 
$\frac{r-1}{r}n-(k+1)$, then $|c(E(K_n))|\leq\ex(n,\calH_{k}^{j})+1$ for every $j\in[k+1]$. 
\end{lemma}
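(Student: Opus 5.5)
We argue by contradiction. Suppose $|c(E(K_n))|\ge \ex(n,\calH_k^j)+2 = t_r(n)+f(k-1,k-1)+2$ for some $j$. By Lemma~\ref{thm:HLZ-extremal}, $\ex(n,\calH_k^j)=t_r(n)+f(k-1,k-1)$ does not depend on $j$, so we may fix one $j$. Choose a representing graph $G\in\calR(c,K_n)$. Then $e(G)\ge t_r(n)+f(k-1,k-1)+2$, so $G$ contains a copy of $\calH_{k+1}$; since $G$ is rainbow this would already finish the proof. That argument is too weak on its own, however, because $\ex(n,\calH_{k+1})=t_r(n)+f(k,k)$ is larger. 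The real work is to exploit the freedom in choosing the representing graph.

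\textbf{Step 1: a good partition.} Since $G$ is rainbow and has no rainbow $\calH_{k+1}$, it is $\calH_{k+1}$-free, and $\delta(G)\ge\frac{r-1}{r}n-(k+1)$. Lemma~\ref{thm:HLZ-good} (applied with $k+1$ in place of $k$) therefore gives a $(k+1)$-good partition $V_1\cup\cdots\cup V_r$ of $G$. The minimum degree condition, combined with a standard counting argument, makes every part have size $n/r+O(1)$. Lemma~\ref{maximal} and $\Delta(G[V_i])\le k$ then bound the number $b$ of missing cross-edges by a constant.

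The goal is to find a vertex $z$ with $k+1$ seed edges arranged as in condition (ii) of Lemma~\ref{lem:extension}. Since $\min|V_i|\approx n/r$ is far larger than $b+(k+1)(h-1)$ once $n\ge g$, Lemma~\ref{lem:extension} would then give a rainbow $\calH_{k+1}$, a contradiction.

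\textbf{Step 2: counting and recoloring.} Write $e(G)=e(G_{\rm cr})+\sum_i e(G[V_i])$. The excess $e(G)-\sum_{i<j}|V_i||V_j|$ is at least $f(k-1,k-1)+2$ plus the slack coming from balancing. Lemma~\ref{maximal}(i), used at level $k$ through the $(k)$-type constraints, shows that the inside edges cannot all be absorbed while keeping every vertex's count $d_{V_i}(u)+\sum_{j}\nu(G[N(u)\cap V_j])\le k-1$. Hence some vertex $z$ satisfies $d_{V_s}(z)+\sum_{p\ne s}\nu(G[N(z)\cap V_p])\ge k$. That yields $k$ seed edges at $z$: type-II edges $zw$ together with a matching of type-I edges in the other parts, which are automatically vertex-disjoint from the star.

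The $(k+1)$-st seed edge comes from recoloring. Take any inside edge $e'=xy$ in some part that is disjoint from the $k$ seed edges already chosen; such an edge exists because the excess exceeds $f(k-1,k-1)$. Consider the $K_n$-edge $zx$ or $zy$, or an edge between $z$ and a suitable vertex. Swap the representative of that edge's color so that the new representing graph $G'$ contains the needed extra edge at $z$. Since $G'$ still satisfies the hypotheses, the partition argument applies to it as well, and the bounded loss of at most one edge from $G$ can be absorbed. This gives $k+1$ seed edges at $z$ in a rainbow graph, and Lemma~\ref{lem:extension} finishes the contradiction.

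\textbf{Main obstacle.} The hard part is Step 2. We must show precisely that exceeding $t_r(n)+f(k-1,k-1)+1$ colors forces a vertex with $k$ seed edges. We must also show that the single extra color can be converted into a $(k+1)$-st disjoint seed edge by switching representatives, without destroying the other seeds or the partition structure. My first attempt would be as follows. Among all representing graphs, choose $G$ to maximize $e(G_{\rm cr})$. Then use Lemma~\ref{maximal}(ii),(iii) on the minimal induced subgraph to locate $z$. Finally, argue that the color of an edge $zu$ with $u$ suitably placed must appear elsewhere only at a nonessential edge, so that it can be swapped in.
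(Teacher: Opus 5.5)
Your Step 1 matches the paper: a $(k+1)$-good partition of a representing graph $G$, parts of size $n/r+O(1)$, and $O(1)$ missing cross-edges. Step 2, however, has a genuine gap exactly where you flag the main obstacle. A single swap of representatives at a fixed center $z$ cannot be guaranteed to create a $(k+1)$-st seed edge, because nothing in your argument controls the color of any particular edge. For instance, nothing you have established prevents every non-edge $zu$ of $G$ with $u\in V_s$ from carrying the color of the type-II seed $zw_1$. Likewise, every non-edge of $G$ inside $N_G(z)\cap V_p$ may carry the color of a type-I seed lying in $V_p$. In that case each swap you describe only trades one seed for another, and the minimum-degree hypothesis says nothing about where colors sit. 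Choosing $G$ to maximize $e(G_{\rm cr})$ controls edges, not colors, so it does not close the gap either. A smaller point: failure of $k$-goodness may come from condition (ii) of Definition~\ref{def:good-partition} rather than (iii). So your ``hence some vertex $z$ satisfies\ldots'' does not follow as written, though it is easily patched by taking $z\in V_i$ adjacent to everything outside $V_i$.

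The missing idea is to give up the fixed center and use abundance of choice instead. Let $T_i\subseteq V_i$ be the vertices adjacent in $G$ to all of $V(G)\setminus V_i$, so $|T_i|=n/r+O(1)$. If $K_n[T_i]$ had a rainbow matching of size $k+1$, swapping it into $G$ would give $k+1$ type-I seed edges at a vertex of another part, and Lemma~\ref{lem:extension} would yield a rainbow $\calH_{k+1}$. Hence every rainbow matching there has size at most $k$, and by pigeonhole $K_n[T_i]$ contains a monochromatic matching $M_i$ of linear size, of some color $\alpha_i$. Deleting from $G$ the representatives of $\alpha_1,\alpha_2$ gives $G_0$ with $e(G_0)\ge\ex(n,\calH_k^j)$, and adding any edge of $M_1\cup M_2$ keeps it rainbow. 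Since $M_1,M_2$ are huge, such an edge can be chosen to avoid any given copy of $\calH_k^j$ and serve as a type-I seed at its center. Lemma~\ref{lem:extension} then shows $G_0$ is $\calH_k^j$-free. Consequently $\alpha_1\neq\alpha_2$, and by the characterization in Lemma~\ref{thm:HLZ-extremal}, $G_0\in\mathcal G_{n,k,r}$ with Tur\'an parts equal to the $V_i$. Only now is the local structure known exactly. The embedded graph ($J$ or $2K_k$) in some part $W_a$ has a matching $M$ of size $k-1$. Take $e_1=xy\in M_1$ with $M_1$ in a part other than $W_a$, and $e_2\in M_2$ disjoint from $M$. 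Then the vertex $x$, a new center rather than your $z$, carries the $k+1$ seed edges $M\cup\{e_2\}$ (type-I) and $e_1$ (type-II) in the rainbow graph $G_0+e_1+e_2$. Your outline uses neither this rainbow-versus-monochromatic matching dichotomy nor the extremal characterization, and without some such device the $(k+1)$-st seed edge is not obtained.
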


\begin{proof}
Let $h=\max\limits_{i\in[k+1]}|V(H_i+v)|$. Fix $j\in[k+1]$, and assume for contradiction that $|c(E(K_n))| \geq \ex(n,\calH_{k}^{j})+2$. Take any $G\in\calR(c,K_n)$. As $G$ is rainbow and the coloring $c$ contains no rainbow copy of $\calH_{k+1}$, it follows that $G$ is $\calH_{k+1}$-free. 
By Lemma~\ref{thm:HLZ-good}, $G$ admits a $(k+1)$-good partition $V(G)=V_1\cup\cdots\cup V_r$. Recall that $G_{\rm cr}=G-(\bigcup\limits_{i=1}^r E(G[V_i]))$.

\begin{claim}\label{alba}
The $(k+1)$-good partition $V(G)=V_1\cup\cdots\cup V_r$ is almost balanced, in the sense that
$|V_i|=\frac nr+O(1)$ for every $i\in[r]$. Moreover, $e(G_{\rm cr})=\sum\limits_{1\leq i<j\leq r}|V_i||V_j|-O_{k,r}(1)$. 
\end{claim}
\begin{proof}[\bf Proof of Claim~\ref{alba}]
Let $G'$ be the minimal induced subgraph of $G$ such that $e(G')-\sum\limits_{1\le i<j\le r}|V'_i||V'_j|$ is maximal, where $V'_i=V(G')\cap V_i$ for each $i\in [r]$. By Lemma~\ref{maximal} (i), we have
\[
e(G)-\sum\limits_{1\le i<j\le r}|V_i||V_j|\leq e(G')-\sum\limits_{1\le i<j\le r}|V'_i||V'_j|\leq f(k,k).
\]
Combining this inequality with
$e(G)\ge \ex(n,\calH_{k}^{j})+2= t_r(n)+f(k-1,k-1)+2$, where the equality follows from Lemma~\ref{thm:HLZ-extremal}, we obtain
\begin{equation}\label{eq:P-close}
0\le t_r(n)-\sum_{1\le i<j\le r}|V_i||V_j|\le f(k,k)-f(k-1,k-1)-2.
\end{equation}

Write $|V_i|=n_i$. Observe that 
\[
\begin{aligned}
\sum\limits_{1\le i<j\le r}|V_i||V_j|&=\frac{1}{2}(n^2-\sum\limits_{i=1}^r n^2_i)
=\frac{1}{2}(n^2-\sum\limits_{i=1}^r (n_i-\frac{n}{r})^2-\frac{n^2}{r})
=\frac{r-1}{2r}n^2-\frac{1}{2}\sum\limits_{i=1}^r (n_i-\frac{n}{r})^2.
\end{aligned}
\]
Note that $\frac{r-1}{2r}n^2-\frac{r}{8}\leq t_r(n)\leq\frac{r-1}{2r}n^2$.
Using \eqref{eq:P-close}, we deduce
\[
\sum_{i=1}^r
\left(n_i-\frac nr\right)^2
\le
2(f(k,k)-f(k-1,k-1)-2)+\frac r4.
\]

Define $c_0=2\bigl(f(k,k)-f(k-1,k-1)-2\bigr)+\frac r4$. Then $\sum_{i=1}^r \left(n_i-\frac nr\right)^2 \le c_0$, so $(n_i-\frac nr)^2\le c_0$ for each $i\in[r]$.
Hence,
\[
\frac nr-\sqrt{c_0}
\le |V_i|
\le \frac nr+\sqrt{c_0}.
\]
Since $c_0$ depends only on $k$ and $r$, we conclude
\begin{equation}\label{eq:balanced-parts}
|V_i|=\frac nr+O_{k,r}(1)
\qquad\text{for every }i\in[r].
\end{equation}

We next estimate $e(G_{\rm cr})$. By Definition~\ref{def:good-partition}, condition~(i) gives $\Delta(G[V_i])\le k$; whereas condition (ii) holds for every index in $[r]$, applying it to any index different from $i$ yields $\nu(G[V_i])\leq k$. Consequently, $e(G[V_i]) \leq f(k, k)$ for every $i \in [r]$.
It follows that 
\begin{align}
e(G_{\rm cr})
&=e(G)-\sum_{i=1}^r e(G[V_i]) \notag\\
&\geq e(G)-rf(k,k) \notag\\
&\geq t_r(n)+f(k-1,k-1)+2-rf(k,k).
\label{ecr}
\end{align}
Combining~\eqref{ecr} with $e(G_{\rm cr})\leq \sum\limits_{1\le i<j\le r}|V_i||V_j|\leq t_r(n)$ yields
\[
\begin{aligned}
0
&\leq
\sum_{1\leq i<j\leq r}|V_i||V_j|-e(G_{\rm cr})
\leq t_r(n)-e(G_{\rm cr})
\leq rf(k,k)-f(k-1,k-1)-2.
\end{aligned}
\]
Therefore,
\[
    e(G_{\rm cr})
      =
      \sum_{1\leq i<j\leq r}|V_i||V_j|
      -O_{k,r}(1).
\]
Hence there exists a constant $d=d(k,r)$ such that $G$ has at most $d$ missing cross-edges between distinct parts.
\end{proof}

For each $i\in[r]$, define $T_i=\{v\in V_i: V(G)\setminus V_i\subseteq N_{G}(v)\}$. Then one sees that every vertex in $V_i\setminus T_i$ is incident to at least one missing cross-edge. Since there are at most $d$ missing cross-edges, the number of their endpoints is a constant independent of $n$. Combined with \eqref{eq:balanced-parts} gives us
\[
|T_i|=\frac nr+O(1)
\qquad\text{for every }i\in[r].
\]

\begin{claim}\label{norm}
For every $i\in[r]$, the coloring induced on $K_n[T_i]$ contains no
rainbow matching of size $k+1$.
\end{claim}
\begin{proof}[\bf Proof of Claim~\ref{norm}]
Fix $i\in[r]$. Suppose for contradiction that the edges in $\{e_q=x_qy_q\in E(K_n[T_i])$, $q\in[k+1]\}$ form a rainbow matching of size $k+1$. For each $q$, replace in $G$ the edge of color $c(e_q)$ by $e_q$. Since these colors are distinct, the resulting graph $G^*$ is again a representing graph. At most $k+1$ edges of $G$ are deleted.

We shall apply Lemma~\ref{lem:extension} to show that $G^*$ admits  a copy of $\mathcal H_{k+1}$. Note that $G^*$ has a partition $V_1,\ldots,V_r$. We are to verify that $G^*$ satisfies Lemma~\ref{lem:extension}(i), (ii) and (iii).

First, by Claim~\ref{alba}, $G$ has at most $d$ missing cross-edges. Since at most $k+1$ edges of $G$ are deleted during the construction of $G^*$, $G^*$ has at most $d+k+1$ missing cross-edges. Hence Lemma~\ref{lem:extension}(i) holds for $b=d+k+1$.

Next, choose an index $s\ne i$. By the definition of $T_i$, every vertex of $V_s$ is adjacent to every endpoint of $e_1,\ldots,e_{k+1}$ in $G$. A vertex of $V_s$ can lose this property in $G^*$ only if it is incident with a deleted edge in $E(G[V_i,V_s])$. Hence at most $k+1$ vertices of $V_s$ lose such property. By \eqref{eq:balanced-parts}, $|V_s|=\frac{n}{r}+O_{k,r}(1)>k+1$ for sufficiently large $n$, hence we can choose a vertex $z\in V_s$ adjacent in $G^*$ to both endpoints of every $e_q$ for $q\in[k+1]$. Moreover, since $e_q=x_qy_q\in E(G^*[V_i])$, and $z\in V_s$ ($s\ne i$) is adjacent to both $x_q$ and $y_q$, each $e_q$ is a type-I seed edge. Thus Lemma~\ref{lem:extension}(ii) holds for $t=k+1$.

Finally, note that $h,d,c_0,k$, and $r$ are fixed. By Claim~\ref{alba}, for sufficiently large $n$, we have
\[
    \min_{\ell\in[r]}|V_\ell|
    \geq \frac{n}{r}-\sqrt{c_0}
    >(d+k+1)+(k+1)(h-1).
\]
Thus Lemma~\ref{lem:extension}(iii) holds.

Therefore, $G^*$ admits a copy of $\mathcal H_{k+1}$ centered at $z$.
Because $G^*$ is a representing graph, this copy of $\mathcal H_{k+1}$ is rainbow, contradicting the assumption on $c$.
\end{proof}

\begin{claim}\label{monom}
There exists a constant $c_1=c_1(k,r)$ such that, for every $i\in[r]$, the colored complete graph $K_n[T_i]$ contains a
monochromatic matching $M_i$ satisfying $|M_i|\ge\frac{\frac nr-c_1-2k}{4k}$.
\end{claim}

\begin{proof}[\bf Proof of Claim~\ref{monom}]
By the estimate for $|T_i|$, one has
\begin{equation}\label{eq:Si-lower-bound}
|T_i|\ge\frac nr-c_1
\qquad\text{for every }i\in[r].
\end{equation}

Fix $i\in[r]$, and take a maximal rainbow matching $I$ in $K_n[T_i]$. By Claim~\ref{norm}, $|I|\le k$.
Let $U_i=T_i\setminus V(I)$.
Then
\begin{equation}\label{eq:Ui-lower-bound}
|U_i|
=
|T_i|-2|I|
\ge
|T_i|-2k.
\end{equation}

Every color appearing on an edge of $K_n[U_i]$ already appears on an edge of $I$. Otherwise an edge of a new color in
$K_n[U_i]$ could be added to $I$, contradicting the maximality of $I$. Consequently, the edges of $K_n[U_i]$ use at most $k$ distinct colors.

By the pigeonhole principle, there exists a color $\alpha_i$ that appears on at least
\[
\frac{1}{k}\binom{|U_i|}{2}
\]
edges of $K_n[U_i]$. 

Let $F_i$ be the spanning subgraph of $K_n[U_i]$ whose edge set consists of all edges colored $\alpha_i$, and let $M_i$ be a maximum matching in $F_i$. Then $V(M_i)$ is a vertex cover of $F_i$, which yields $e(F_i)\le2|M_i|(|U_i|-1)$.
On the other hand, $e(F_i)\ge\frac{|U_i|(|U_i|-1)}{2k}$.
For sufficiently large $n$, we have $|U_i|\ge2$. Comparing these two inequalities gives $|M_i|\ge\frac{|U_i|}{4k}$.
Combining \eqref{eq:Si-lower-bound} and \eqref{eq:Ui-lower-bound} gives us 
\[
\begin{aligned}
|M_i|
&\ge
\frac{|T_i|-2k}{4k}
&\ge
\frac{\frac nr-c_1-2k}{4k}.
\end{aligned}
\]
Hence $M_i$ is a monochromatic matching of the required size.
\end{proof}

Let $M_1\subseteq E(K_n[T_1])$ and $M_2\subseteq E(K_n[T_2])$ be monochromatic matchings of colors $\alpha_1$ and $\alpha_2$, respectively.
Deleting one edge of colors $\alpha_1$ and one edge of $\alpha_2$ when $\alpha_1 \neq \alpha_2$, and one edge of color $\alpha_1$ when $\alpha_1 = \alpha_2$ from $G$ yields a graph, say $G_0$. Note that the graph $G$ is a representing graph. Hence, it contains precisely one edge of color $\alpha_1$. Consequently, $G_0$ contains no edge of color $\alpha_1.$ Furthermore, 
\begin{equation}\label{eq:Gprime-lower}
e(G_0)=e(G)-|\{\alpha_1,\alpha_2\}|
 \ge\ex(n,\calH_{k}^{j})+2-|\{\alpha_1,\alpha_2\}|
 \ge\ex(n,\calH_{k}^{j}).
\end{equation}

\begin{claim}\label{clm:H-free}
The graph $G_0$ is $\calH_{k}^{j}$-free.
\end{claim}
\begin{proof}[\bf Proof of Claim~\ref{clm:H-free}]
Assume for contradiction that $G_0$ contains a copy, say $R,$ of $\calH_{k}^{j}$. Let $u\in V_s$ be the center of $R$. Note that $M_1$ and $M_2$ are monochromatic matchings of colors $\alpha_1$ and $\alpha_2$ with $V(M_1)\subseteq V_1$ and $V(M_2)\subseteq V_2$, respectively. Without loss of generality, we assume that $V_s\not= V_1.$ Note that $G_0$ is obtained from $G$ by deleting at most two edges. Denote by $L$ the set of all endpoints of the deleted edges. Thus, $|L|\leq4$.

By Claim~\ref{monom}, $M_1$ is a monochromatic matching of color $\alpha_1$ satisfying $|M_1|\ge\frac{\frac nr-c_1-2k}{4k}$.
Note also that $|V(R)|$ and $|L|$ are bounded independently of $n$. Hence, for
sufficiently large $n$, we can choose an edge $xy$ in $M_1$ such that $\{x,y\}\cap\bigl(V(R)\cup L\bigr)=\varnothing$.

Notice that $x,y\in T_1$ and $u\in V_s$ with $s\ne 1$. By the definition of $T_1$ one sees that $ux,uy\in E(G)$.
Moreover, $x, y\notin L$, and hence $ux,uy\in E(G_0)$. Bearing in mind that $G_0$ contains no edge of color $\alpha_1$ and $c(xy)=\alpha_1$. Hence, the graph $G_0+xy$ is rainbow. We shall apply Lemma~\ref{lem:extension} to show that $G_0+xy$ admits a copy of $H_j+v$ centered at $u$. 

By Claim~\ref{alba}, $G$ has at most $d$ missing cross-edges between distinct parts. Furthermore, it is known that $G_0$ is obtained from $G$ by deleting at most two edges. Consequently, the graph $G_0+xy$ has at most $d+2$ missing cross-edges. Hence $G_0+xy$ satisfies Lemma~\ref{lem:extension}(i) with $b=d+2$.

Furthermore, one sees that $xy$ is an edge of $(G_0+xy)[V_1]$, $u\in V_s$ with $s\ne 1$, and $ux,uy$ are edges in $G_0$. Hence, the edge $xy$ is a seed edge of type-I. Thus $G_0+xy$ satisfies Lemma~\ref{lem:extension}(ii) with $t=1$. 

Note that $c_0=2\bigl(f(k,k)-f(k-1,k-1)-2\bigr)+\frac{r}{4}$.
Since $h,d,k,r$ are fixed and $c_0$ depends only on $k$ and $r$, by Claim~\ref{alba}, one has, for sufficiently large $n$,
\begin{equation}\label{eq:avoidR}
\min_{\ell\in[r]}|V_\ell|
\geq \frac{n}{r}-\sqrt{c_0}
>(d+2)+(h-1)+|V(R)\setminus\{u\}|
>(d+2)+(h-1).
\end{equation}
Thereby, $G_0+xy$ satisfies Lemma~\ref{lem:extension}(iii). 

By Lemma~\ref{lem:extension}, one sees $G_0+xy$ admits a copy of $H_j+v$ centered at $u$. Note that $\{x,y\}\cap\bigl(V(R)\setminus\{u\}\bigr)=\emptyset$.
By \eqref{eq:avoidR}, the images of the vertices of $H_j$ can be chosen outside $V(R)\setminus\{u\}$. Hence this copy of
$H_j+v$ and $R$ are vertex-disjoint outside $u$. Therefore, their union forms a copy of $\mathcal H_{k+1}$ in $G_0+xy$. Since $G_0+xy$ is rainbow, this copy is rainbow, a contradiction, as desired.
\end{proof}

Together with Lemma~\ref{thm:HLZ-extremal}, ~\eqref{eq:Gprime-lower} and Claim~\ref{clm:H-free},  we have
\[
e(G_0)=\ex(n,\calH_{k}^{j})\qquad\text{and}\qquad G_0\in \mathcal G_{n,k,r}.
\]
Moreover, $\alpha_1\ne\alpha_2$. Hence $G_0$ is obtained from $G$ by deleting exactly two edges, and so $e(G)=e(G_0)+2=\operatorname{ex}(n,\mathcal H_k^j)+2$.
Note that $G\in\calR(c,K_n)$. Hence $|c(E(K_n))|=e(G)=\operatorname{ex}(n,\mathcal H_k^j)+2$.

Note that $G_0\in \mathcal G_{n,k,r}.$ Hence, $G_0$ is obtained from Tur\'an graph $T_r(n)$ by embedding a graph, say $J$, with $2k-1$ vertices, $k^2 - \frac{3k}{2}$ edges with maximum degree $k-1$ in one partite set if $k$ is even and embedding $2K_k$ in one partite set if $k$ is odd. For convenience, let $W_1\cup\cdots\cup W_r$ be the balanced partition of $V(T_r(n))$. Clearly, $W_1\cup\cdots\cup W_r$ is also a balanced partition of $V(G_0)$. 
The claim below describes the relationship between the new partition $W_1\cup\cdots\cup W_r$ of $V(G_0)$ and its original partition $V_1\cup \cdots \cup V_r$.

Recall that the \textit{symmetric group of degree} $r$, denoted by $S_r$, is defined as $S_r=\{\sigma \mid \sigma:[r]\to [r],\sigma \text{ is a bijection}\}$. Each element of $S_r$ is a \textit{permutation} on $\{1,2,\dots,r\}$. 
\begin{claim}\label{clm:partitions-and-J}
There exists a permutation $\sigma$ in $S_r$ such that $V_i=W_{\sigma(i)}$ for every $i\in[r]$. Moreover, the graph $2K_k$ (resp. $J$) contains a maximal matching of size $k-1$. 
\end{claim}

\begin{proof}[\bf Proof of Claim~\ref{clm:partitions-and-J}]
We begin by proving the first part of Claim \ref{clm:partitions-and-J}. 
As observed in the proof of Claim~\ref{alba}, we have $e(G[V_i])\leq f(k,k)$ for every $i\in[r]$. 
Therefore,
\begin{equation}\label{eq:Gprime-internal-bound}
e(G_0[V_i])
\le e(G[V_i])
\le f(k,k).
\end{equation}

For each $V_i$, choose $W_j$ such that $|V_i\cap W_j|=\max\limits_{p\in[r]}|V_i\cap W_p|$, and define $\sigma(i)=j$.
In this way, we obtain a map $\sigma:[r]\to[r]$. Next we are to show $\sigma$ is a permutation in $S_r.$ In fact, the sets $V_i\cap W_{1},\ldots,V_i\cap W_{r}$ form a partition of $V_i$, which yields $|V_i\cap W_{\sigma(i)}|\ge\frac{|V_i|}{r}$.
By Claim~\ref{alba}, $|V_i|\ge\frac nr-\sqrt{c_0}$.
It follows that
\[
|V_i\cap W_{\sigma(i)}|
\ge
\frac{n}{r^2}-\frac{\sqrt{c_0}}{r}.
\]

Suppose that $V_i\cap W_{q}\ne\emptyset$ for some $q\ne \sigma(i)$. Since $G_0$ contains the Tur\'an graph $T_r(n)$ with partite sets
$W_{1},\ldots,W_{r}$, we have 
\begin{equation}\label{e:9}
\begin{aligned}
e(G_0[V_i])
&\ge
|V_i\cap W_{q}|\,|V_i\cap W_{\sigma(i)}|
\ge
|V_i\cap W_{\sigma(i)}|
\ge
\frac{n}{r^2}-\frac{\sqrt{c_0}}{r}.
\end{aligned}
\end{equation}
It follows from \eqref{eq:Gprime-internal-bound} that $e(G_0[V_i])$ is bounded above by a finite constant, while \eqref{e:9} implies that $e(G_0[V_i])$ can be arbitrarily large for large $n$. This is impossible. 
Hence $V_i\cap W_{q}=\emptyset$ for all $q\ne \sigma(i)$, i.e.,  $V_i\subseteq W_{\sigma(i)}.$ Since
$i\in[r]$ is arbitrary, we obtain $V_i\subseteq W_{\sigma(i)}$ for every $i\in[r]$.
Note that both $V_1,\ldots,V_r$ and $W_{1},\ldots,W_{r}$ are partitions of the same vertex set $V(G_0)$ (each $V_i$ (resp. $W_j$) is nonempty). Hence
$\sigma(1),\ldots,\sigma(r)$ are pairwise distinct; otherwise, the sets $V_1\cup\ldots\cup V_r$ would be contained in $(W_1\cup\ldots \cup W_r)\setminus W_p$ for some $p\in[r]$, which implies that $W_p=\emptyset$, a contradiction.
Therefore, $\sigma\in S_r$. It follows that $V_i=W_{\sigma(i)}$ for every $i\in[r]$.

In what follows, we prove the second part of the claim.

For odd $k$, it is easy to see $K_{k}$ contains a matching of size $\frac{k-1}{2}$. 
Taking such a matching from each copy gives a matching in $2K_k$ of size $2\cdot\frac{k-1}{2}=k-1$.

We next consider the case when $k$ is even. When $k=2$, then $e(J)=2^2-\frac{3\times2}{2}=1$, and so $J$ contains a matching of size $1=k-1$.
Now let $k\ge4$, and suppose to the contrary that $\nu(J)\le k-2$. 

Since $\Delta(J)\le k-1$, Lemma~\ref{CH} gives
\begin{equation}\label{eq:10}
e(J)
\le f(k-2,k-1)
\le (k-2)(k-1)+(k-2)
=k^2-2k.
\end{equation}
On the other hand,
\(
e(J)
=k^2-\frac{3k}2
>k^2-2k,
\)
which contradicts \eqref{eq:10}. Hence $\nu(J)\ge k-1$.
Note that $J$ has $2k-1$ vertices. Consequently, we also have $\nu(J)\le k-1$. Therefore, $\nu(J)=k-1$. 
\end{proof}

By Claim~\ref{clm:partitions-and-J}, after relabeling $W_1,\ldots,W_r$, we may assume that
$V_i=W_i$ for every $i\in[r]$. Moreover, $J$ (or $2K_k$) admits a maximal matching $M$ of size $k-1$. 
Let $W_a$ denote the part in which $J$ (or $2K_k$) is embedded. Clearly, $V(M)\subseteq W_a$. 
Recall that $M_1$ and $M_2$ are monochromatic matchings of colors
$\alpha_1$ and $\alpha_2$ with $V(M_1)\subseteq W_1$ and $V(M_2)\subseteq W_2$, respectively. Without loss of generality, we may assume that $W_1\ne W_a$.

Note that $|M_2|\ge\frac{\frac nr-c_1-2k}{4k}$. Then, for sufficiently large $n$, we may choose an edge $e_2\in M_2$ so that $M\cup \{e_2\}$ is a matching.  
Choose any edge $e_1\in M_1$. By the construction of $G_0$, we have $G_0+e_1+e_2\in \calR(c,K_n)$. 
We next apply Lemma~~\ref{lem:extension} to show that $G_0+e_1+e_2$ contains a copy of $\mathcal H_{k+1}$ to derive a contradiction.

Since $G_0$ contains the $r$-partite Tur\'an graph with parts $W_1,\ldots,W_r$, so does $G_0+e_1+e_2$. Hence it satisfies Lemma~\ref{lem:extension}(i) for $b=0$.

For convenience, let $e_1=xy$. Note that the vertex $x\in W_1$, and $x$ is adjacent to both endpoints of every edge in $M\cup\{e_2\}$. Hence every edge in $M\cup\{e_2\}$ is a seed edge of type-I. The edge $e_1$ is of type-I\!I, and it is disjoint from $M\cup\{e_2\}$. Thus $G_0+e_1+e_2$ satisfies 
Lemma~\ref{lem:extension}(ii) for $t=k+1$.

Recall that $W_1,\ldots,W_r$ form a balanced partition of $V(G_0)$. For large $n$, one has
\[
    \min_{\ell\in[r]}|W_\ell|
    =\left\lfloor\frac{n}{r}\right\rfloor
    >(k+1)(h-1).
\]
Thus $G_0+e_1+e_2$ satisfies Lemma~\ref{lem:extension}(iii).

Therefore by Lemma~\ref{lem:extension}, $G_0+e_1+e_2$ admits a copy of $\mathcal H_{k+1}$ centered at $x$. As $G_0+e_1+e_2$ is rainbow, the copy of $\mathcal H_{k+1}$ is also rainbow, contradicting the assumption on $c$. This completes the proof.
\end{proof}

\section{Proof of Theorem~\ref{thm:main}}\label{s4}
In this section we prove Theorem~\ref{thm:main}. We begin by deriving the lower bound for $\ar(n,\calH_{k+1})$, then establish its upper bound. For ease of reading, we restate the theorem below:\vspace{2mm}

\noindent\textbf{Theorem~1.1.} 
{\it Let $r\ge2$ and $k\ge1$ be fixed, and let $H_1,\ldots,H_{k+1}$ be edge-critical graphs with $\chi(H_i)=r$ for every $i\in[k+1]$.
There is a function $\phi\bigl(r, |V(H_1)|,\ldots,|V(H_{k+1})|\bigr)$ such that if $n\ge\phi\bigl(r, |V(H_1)|,\ldots,|V(H_{k+1})|\bigr)$,
then
$
\ar(n,\calH_{k+1})=\ex(n,\calH_{k}^{j})+1
$
for every $j\in[k+1]$.}

\begin{proof}[\bf Proof of Theorem~\ref{thm:main}]

We first prove $\ar(n,\calH_{k+1})\ge \ex(n,\calH_{k}^{j})+1$ for every $j\in[k+1]$. 

Let $G\in\mathcal G_{n,k,r}$. By Lemma~\ref{thm:HLZ-extremal}, for every $j\in[k+1]$,
$G$ is $\mathcal H_k^j$-free and $e(G)=\ex(n,\calH_{k}^{j})$. 
Color every edge of $G$ with a distinct color, and color all edges of $K_n-E(G)$ in another color. That is to say, the total number of colors for $E(K_n)$ is 
$e(G)+1=\ex(n,\calH_{k}^{j})+1$.

Suppose that this edge-coloring of $K_n$ contains a rainbow copy of $\calH_{k+1}$. Since every edge outside $G$ receives the same color, the rainbow copy of $\calH_{k+1}$ 
can use at most one edge not in $G$. If such an edge is employed, remove the branch containing that edge. 
Otherwise, delete an arbitrary branch. 
The remaining $k$ branches form a copy of $\calH_{k}^{j}$ for some $j\in[k+1]$, and all of its edges lie in $G$. 
This contradicts the fact that $G$ is $\calH_{k}^{j}$-free.
Therefore, for every $j\in[k+1]$, we have $\ar(n,\calH_{k+1})\ge \ex(n,\calH_{k}^{j})+1$.

We now prove $\ar(n,\calH_{k+1})\le \ex(n,\calH_{k}^{j})+1$ for every $j\in[k+1]$. 

Fix $j\in[k+1]$. 
Suppose to the contrary that there is an edge-coloring $c$ of $K_n$ contains no rainbow copy of
$\calH_{k+1}$ with at least $\ex(n,\calH_{k}^{j})+2$ colors and $n\geq \phi\bigl(r,|V(H_1)|,\ldots,|V(H_{k+1})|\bigr)$, where $\phi\bigl(r,|V(H_1)|,\ldots,|V(H_{k+1})|\bigr)\gg g\bigl(r,|V(H_1)|,\ldots,|V(H_{k+1})|\bigr)$.

\textbf{Case~1: $k= 1$}.

By Lemma~\ref{thm:HLZ-extremal}, we have $\ex(n,\calH_1^{j})=t_r(n)$. Moreover, $\ex(n,\calH_2)=t_r(n)+f(1,1)=t_r(n)+1$. 
Let $G\in\mathcal R(c,K_n)$ be a representing graph. Then $e(G)=|c(E(K_n))|\ge t_r(n)+2$.
Thus $e(G)>\ex(n,\calH_2)$, and so $G$ contains a copy of $\calH_2$.
Since $G$ is rainbow, this copy of $\calH_2$ is rainbow in the
coloring $c$, a contradiction. Therefore $\ar(n,\calH_2)\le \ex(n,\calH_1^{j})+1$ for every $j\in[2]$.

\textbf{Case~2: $k\geq 2$}.

If every representing graph in $\mathcal R(c,K_n)$ has minimum degree at least $(r-1)n/r-(k+1)$,
then by Lemma~\ref{lem:high-degree}, the coloring $c$ uses at most $\ex(n,\calH_{k}^{j})+1$ colors, contradicting the assumption that $c$ uses at least $\ex(n,\calH_{k}^{j})+2$ colors. We may therefore suppose that there exists a representing graph $L^n\in \mathcal R(c,K_n)$ with $\delta(L^n)<\frac{r-1}{r}n-(k+1)$.
Thus there is a vertex $u_n\in V(K_n)$ satisfying $d_{L^n}(u_n)\le\lfloor (r-1)n/r\rfloor-(k+1)$.

Let $G^n=K_n$ and $ G^{n-1}=G^n-u_n$, and let $c_{n-1}$ be the coloring of $G^{n-1}$  inherited from $c$.
Since \(L^n\) contains exactly one edge of every color, deleting \(u_n\) can remove at most \(d_{L^n}(u_n)\) distinct colors from the coloring. Consequently, $G^{n-1}$ has at least
$
\ex(n,\calH_{k}^{j})+2-(\lfloor(r-1)n/r\rfloor-(k+1))
$
colors. Since $\ex(n,\calH_{k}^{j})-\ex(n-1,\calH_{k}^{j}) = t_r(n)-t_r(n-1) = \left\lfloor\frac{r-1}{r}n\right\rfloor$, the edge-colored complete graph $G^{n-1}$ admits at least $\ex(n-1,\calH_{k}^{j})+k+3$ distinct colors.

If every representing graph in $\mathcal R(c_{n-1},G^{n-1})$ has minimum degree at least $(r-1)(n-1)/r-(k+1)$,
then Lemma~\ref{lem:high-degree} implies that $c_{n-1}$ uses at most $\ex(n-1,\calH_{k}^{j})+1$ colors, which yields a contradiction.  Hence there exists a representing graph $L^{n-1}\in \mathcal R(c_{n-1},G^{n-1})$ and a vertex $u_{n-1}\in V(G^{n-1})$ such that $d_{L^{n-1}}(u_{n-1})\le\lfloor (r-1)(n-1)/r\rfloor-(k+1)$.

Repeating this argument, we may construct a sequence of edge-colored complete graphs
\[
G^n,G^{n-1},\ldots,G^{n-\ell}
\]
such that the number of colors of $G^{n-\ell}$ is at least $\ex(n-\ell,\calH_{k}^{j})+\ell(k+1)+2$, which is based on $\phi\bigl(r,|V(H_1)|, \ldots,|V(H_{k+1})|\bigr)\gg g\bigl(r,|V(H_1)|,\ldots,|V(H_{k+1})|)$. Since an edge-coloring of $G^{n-\ell}$ has at most $\binom{n-\ell}{2}$ colors, this will yield a contradiction for large $\ell$.

Therefore $\ar(n,\calH_{k+1})\leq \ex(n,\calH_{k}^{j})+1$ for every $j\in[k+1]$. 
By Lemmas~\ref{lem:AHS} and~\ref{thm:HLZ-extremal}, we obtain
\begin{equation*}
\operatorname{ar}(n,\mathcal{H}_{k+1})=\ex(n,\calH_{k}^{j})+1=t_r(n) +1+
\begin{cases}
k^2 - k & \text{if } k \text{ is odd},\\[4pt]
k^2 - \dfrac{3}{2}k & \text{if } k \text{ is even}.
\end{cases}
\end{equation*}
This completes the proof.
\end{proof}

\section{Concluding remarks}\label{s5}

Theorem~\ref{thm:main} determines the anti-Ramsey number of a suspension consisting of at least two edge-critical graphs. 
It is also natural to investigate the remaining case of the suspension formed by a single edge-critical graph.

We recall a classical result of Erd\H{o}s, Simonovits and S\'os~\cite{ESS}. 
For a graph $F$, define $\chi(F^{-})=\min\{\chi(F-e):e\in E(F)\}$.

\begin{theorem}[\cite{ESS}]
Let $F$ be a fixed graph with $\chi(F^{-})\geq 3$. Then $\operatorname{ar}(n,F)=t_{\chi(F^{-})-1}(n)+o(n^2)$.
\end{theorem}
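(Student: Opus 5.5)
The statement just above is the Erd\H{o}s--Simonovits--S\'os theorem: for fixed $F$ with $p:=\chi(F^{-})\ge 3$, we have $\operatorname{ar}(n,F)=t_{p-1}(n)+o(n^2)$. I would prove the lower and upper bounds separately. The lower bound is a construction and the upper bound is a reduction to the Erd\H{o}s--Stone--Simonovits theorem.

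For the lower bound, take the Tur\'an graph $T_{p-1}(n)$ and give each of its edges a distinct color. Then color all remaining edges of $K_n$, namely those lying inside the parts, with one extra color. This uses $t_{p-1}(n)+1$ colors. A rainbow copy of $F$ would use at most one edge of the extra color. Deleting that edge (or any edge, if none is used) leaves a subgraph of $F$ of the form $F-e$ that lies in $T_{p-1}(n)$, so it is $(p-1)$-colorable. This contradicts $\chi(F-e)\ge \chi(F^{-})=p$ for every edge $e$. Hence $\operatorname{ar}(n,F)\ge t_{p-1}(n)+1$.

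For the upper bound, fix an edge $e_0$ with $\chi(F-e_0)=p$, so $\chi(F)\in\{p,p+1\}$. Take any coloring $c$ with no rainbow $F$ and pick a representing graph $G\in\mathcal R(c,K_n)$, which is rainbow. It suffices to show that a rainbow graph with $t_{p-1}(n)+\varepsilon n^2$ edges, living inside a colored $K_n$, forces a rainbow $F$.

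The key step is to use the rainbow host to embed $F$ even though $G$ itself only yields $\chi(F)$-chromatic structure when $\chi(F)=p+1$. By Erd\H{o}s--Stone--Simonovits (in its supersaturated, blow-up form), $G$ contains a complete $p$-partite graph $K_p(m)$ with $m$ large, all of its edges having distinct colors. Since $F-e_0$ is $p$-colorable, I can embed $F-e_0$ into this $K_p(m)$ with the endpoints $x,y$ of $e_0$ placed in the same part. This must be done so that the $K_n$-edge $xy$ has a color not already used on the embedded copy. There are many candidate pairs $x,y$ in a part, since $m$ is large. The copy of $F-e_0$ uses only $e(F)$ colors, so it suffices to find a pair $\{x,y\}$ in one part whose color $c(xy)$ avoids those colors, while the rest of $F-e_0$ is embedded in the remaining vertices.

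This requires a small counting or pigeonhole argument, and it is the main obstacle. Suppose every pair inside a part received one of a bounded set of colors. Then I would instead select the embedding iteratively: fix $x,y$ first with some color $\beta=c(xy)$. Then choose the remaining vertices of $F-e_0$ inside the blown-up parts so as to avoid the at most one $G$-edge of color $\beta$. This is possible because $G$ is rainbow, so only one edge of $G$ carries $\beta$, and that edge can be avoided by discarding at most two vertices. With this, $(F-e_0)+xy$ is a rainbow copy of $F$, which is a contradiction. Therefore $|c(E(K_n))|=e(G)\le t_{p-1}(n)+o(n^2)$.
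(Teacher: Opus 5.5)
The paper does not prove this statement. It is quoted from Erd\H{o}s, Simonovits and S\'os in the concluding remarks, so there is no in-paper proof to compare against. Your argument is the classical one and is essentially correct.

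\begin{itemize}
\item The lower-bound construction ($T_{p-1}(n)$ rainbow, all other edges one extra color) is exactly right.
\item For the upper bound, the plain Erd\H{o}s--Stone theorem already gives $K_p(m)$ in the representing graph $G$ for any fixed $m$, once $e(G)\ge t_{p-1}(n)+\varepsilon n^2$ and $n$ is large. No supersaturation is needed.
\end{itemize}

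The mechanism matches what the paper itself uses. That the endpoints of a critical edge share a color class is Step~1 of Lemma~\ref{lem:extension}. Dodging the unique $G$-edge of a prescribed color mirrors the passage from $G$ to $G_0$ before adding $xy$ in the proof of Lemma~\ref{lem:high-degree}.

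Two steps need tightening.

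\textbf{First: placing $x,y$ in the same part.} Your step ``since $F-e_0$ is $p$-colorable, I can embed it with $x,y$ in the same part'' is justified only when $e_0$ is critical, i.e.\ $\chi(F)=p+1$. In that case every proper $p$-coloring of $F-e_0$ gives $x$ and $y$ the same color, since otherwise restoring $xy$ would $p$-color $F$.

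If $\chi(F)=p$ this can fail. Take $F$ to be two disjoint triangles together with a triangle $abc$, a vertex $x$ joined to $a,b$, a vertex $y$ joined to $c$, and $e_0=xy$. Then $\chi(F^{-})=\chi(F)=\chi(F-e_0)=3$, yet every $3$-coloring of $F-e_0$ separates $x$ and $y$.

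That case is trivial, though: $F$ itself embeds into the rainbow $K_p(m)\subseteq G$. So just split into the cases $\chi(F)=p$ and $\chi(F)=p+1$.

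\textbf{Second: the color-avoidance step.} The pigeonhole discussion and the ``bounded set of colors'' case distinction are unnecessary. Your iterative argument works unconditionally, provided you discard the right vertex. Let $g_\beta$ be the unique edge of $G$ with color $\beta=c(xy)$.

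\begin{itemize}
\item If $g_\beta=xy$, nothing is needed. The image of $F-e_0$ uses only cross edges of $K_p(m)$, so it never contains $xy$.
\item Otherwise $g_\beta$ has an endpoint $w\notin\{x,y\}$. Discard only $w$, never $x$ or $y$; your ``at most two vertices'' phrasing would permit discarding them.
\end{itemize}

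With $m\ge |V(F)|+1$ the embedding then goes through, and $(F-e_0)+xy$ is rainbow.
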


In particular, let $F$ be an edge-critical graph with $\chi(F)=r\geq4$. Since $F$ contains a critical edge and
deleting a single edge decreases the chromatic number by at most one, we have $\chi(F^{-})=r-1\geq3$.
Consequently,
\[
    \operatorname{ar}(n,F)=t_{r-2}(n)+o(n^2).
\]

We now apply this result to the suspension $H+v$, where $H$ is edge-critical and $\chi(H)=r$. 
We have $\chi(H+v)=r+1$, and Lemma~\ref{lem:center-critical} implies that $H+v$ is edge-critical. Therefore,
for $r\geq3$,
\[
    \operatorname{ar}(n,H+v)=t_{r-1}(n)+o(n^2).
\]

Determining the exact value, however, appears to depend on the structural properties of $H$.
Complete graphs provide the most classical examples. If $H=K_2$, then $H+v=K_3$. Erd\H{o}s, Simonovits and S\'os~\cite{ESS}
showed that $\operatorname{ar}(n,K_3)=n-1$ for all $n\geq 3$.
On the other hand, if $H=K_r$ with $r\geq3$, then $H+v=K_{r+1}$, and Schiermeyer~\cite{Schiermeyer} proved that $\operatorname{ar}(n,K_{r+1})=\operatorname{ex}(n,K_r)+1=t_{r-1}(n)+1$ for every $n\geq r+1$.

These examples further demonstrate that the single edge-critical graph case cannot be recovered via a formal extension of
Theorem~\ref{thm:main}. In the setting of Theorem~\ref{thm:main}, at least two edge-critical graphs share a common center, and deleting
one of them leaves a non-trivial suspension whose Tur\'an number determines the exact anti-Ramsey number. When only one edge-critical
graph is present, deleting it leaves only the center, so there is no analogous sub-suspension. Consequently, the anti-Ramsey number in the one edge-critical graph case is determined by a different extremal structure.

This observation motivates the following open problem.

\begin{problem}
Let $H$ be an edge-critical graph with $\chi(H)=r\geq3$. Can the exact value of $\operatorname{ar}(n,H+v)$ be determined for
sufficiently large $n$? More specifically, does there exist a subgraph $H'$ of $H+v$ such that $\operatorname{ar}(n,H+v)$ can be
expressed in terms of $\operatorname{ex}(n,H')$?
\end{problem}

\section*{Disclosure statement}
The authors did not report any potential conflict of interest.
\section*{Acknowledgement}
S.L. financially supported by the National Natural Science Foundation of China (Grant Nos.  12571365, 12171190),  and the Open Research Fund of Hubei Key Laboratory of Mathematical Science (Grant No. MPL2026ORG006).
\section*{Data availability}
No data is available during the current study.

\end{document}